\newcommand{\TT}{\mathbb{T}}
\newcommand{\RR}{\mathbb{R}}
\newcommand{\NN}{\mathbb{N}}
\renewcommand{\SS}{\mathbb{S}}
\newcommand{\Cc}{\mathcal{C}}
\newcommand{\Ff}{\mathcal{F}}
\newcommand{\Mm}{\mathcal{M}}
\newcommand{\Pp}{\mathcal{P}}
\renewcommand{\d}{\mathrm{d}}
\newcommand{\dd}{\, \mathrm{d}}
\newcommand{\Lip}{\mathrm{Lip}}
\DeclareMathOperator{\sign}{sign}
\DeclareMathOperator{\dom}{dom}
\DeclareMathOperator{\dist}{dist}
\newtheorem{theorem}{Theorem}[section]
\newtheorem{proposition}[theorem]{Proposition}
\newtheorem{lemma}[theorem]{Lemma}
\newtheorem{remark}[theorem]{Remark}
\DeclareMathOperator{\arcsinh}{arcsinh}
\DeclareMathOperator{\sfth}{sfth}
\title{Convergence Rates of Gradient Methods \\  for Convex Optimization in the Space of Measures\footnote{Accepted for publication at the Open Journal of Mathematical Optimization.}
}
\author{
L\'ena\"ic Chizat\thanks{CNRS, Universit\'e Paris-Saclay, Laboratoire de math\'ematiques d'Orsay, 91405, Orsay, France. \texttt{lenaic.chizat@universite-paris-saclay.fr}} 
}
\begin{document}
\maketitle

\begin{abstract}
We study the convergence rate of Bregman gradient methods for convex optimization in the space of measures on a $d$-dimensional manifold. Under basic regularity assumptions, we show that the suboptimality gap at iteration $k$ is in $O(\log(k)k^{-1})$ for multiplicative updates, while it is in $O(k^{-q/(d+q)})$ for additive updates for some $q\in \{1,2,4\}$ determined by the structure of the objective function. Our flexible proof strategy, based on approximation arguments, allows us to painlessly cover all Bregman Proximal Gradient Methods (PGM) and their acceleration (APGM) under various geometries such as the hyperbolic entropy and $L^p$ divergences. We also prove the tightness of our analysis with matching lower bounds and confirm the theoretical results with numerical experiments on low dimensional problems. Note that all these optimization methods must additionally pay the computational cost of discretization, which can be exponential in $d$.
\end{abstract}

\section{Introduction}
Convex optimization in the space of measures is a theoretical framework that leads to fruitful point of views on a large variety of problems, ranging from sparse deconvolution~\citep{bredies2013inverse} and two-layer neural networks~\citep{bengio2006convex} to global optimization~\citep{lasserre2001global} and many more~\citep{boyd2017alternating}. Various algorithms have been proposed to solve such problems including moments methods~\citep{lasserre2001global}, conditional gradient~\citep{bredies2013inverse,denoyelle2019sliding}, (non-convex) particle gradient flows~\citep{chizat2021sparse} and noisy versions~\citep{mei2018mean, nitanda2020particle}. 

In this paper, we consider perhaps the simplest methods: gradient descent and its extensions that handle non-smooth regularizers and non-Euclidean geometries, the Bregman Proximal Gradient Method (PGM) (an extension of mirror descent~\citep{nemirovskij1983problem} that handles composite objectives) and its acceleration (APGM)~\citep{tseng2010approximation}. Our aim is  {to establish well-posedness and convergence rates for these methods when minimizing composite functions over the space of measures $\Mm(\Theta)$ over a $d$-dimensional manifold $\Theta$, of the form
\begin{align}\label{eq:main-problem-intro}
F(\mu) \coloneqq R\left(\int \Phi\d\mu\right) + H(\mu)
\end{align}
where $\Phi$ is continuous and Hilbert space-valued, $R$ convex and smooth and $H$ is convex and ``simple'' (see precise assumptions in Section~\eqref{sec:objective}). For such problems,  minimizers are typically at an infinite (Bregman) distance from the initialization, and thus all the standard convergence bounds are inapplicable.} 

Our contributions are the following:
\begin{itemize}
\item We recall and adapt (A)PGM in Section~\ref{sec:optim-measure}, taking care of the subtleties that appear in our context (definition of the iterates and lack of strong convexity of the divergence);
\item We prove in Section~\ref{sec:upper-bounds} upper-bounds on the convergence rate for (A)PGM under various structural assumptions, summarized in Table~\ref{table:rates}. These rates depend on the choice of the Bregman divergence and on the precise structure of the objective function; 
\item Tight lower bounds of two kinds are proved in Section~\ref{sec:lower-bounds}: \emph{proof technique}-dependent lower bounds, and \emph{algorithm}-dependent lower bounds (the latter are stronger but do not cover all cases);
\item Numerical experiments on synthetic toy problems in Section~\ref{sec:numerics} often show an excellent agreement between the theoretical rates and the ones observed in practice. Even for cases with an apparent mismatch, a closer look at the structure of the problem shows that the theory still shades light on the observed rates.
\end{itemize}

Our motivation for studying this problem is threefold. First, our results make a case for APGM with the hyperbolic geometry instead of FISTA to solve convex problems in the space of measures, as they show that the former enjoys a faster convergence rate. Second, we believe that a precise understanding of (A)PGM in this context is useful to develop and analyze more complex methods, such as the particle-based (a.k.a.~moving grid) approaches mentioned above\footnote{In fact, the idea of writing this paper came from a technical step in a proof of~\citet{chizat2021sparse}, which studies particle-based methods.}. Third, this setting offers a rich test case to deepen our understanding of Bregman gradient methods in Banach spaces, {and the behavior of optimization algorithms when all minimizers are at an infinite distance from the initialization, beyond the well-explored Hilbert space setting.}

\paragraph{Related work}
The comparison between additive updates ($L^2$ geometry) and multiplicative updates (entropy geometry) is well-known in finite dimensional spaces~\citep{kivinen1997exponentiated}. For instance, for convex optimization in the $n$-dimensional simplex, the two methods typically converge at the same rate but the ``constant'' factor is polynomial in $n$ for additive updates while it is logarithmic in $n$ for multiplicative updates, see~\cite[Section~4]{bubeck2015convex}. We obtain in this paper an infinite dimensional ($n= \infty$) version of this separation; but where the distinction is directly in the rates rather than in the constants.

Analysis of convex optimization in infinite dimensional (Banach) spaces is a classical subject~\citep{bauschke2001essential,bauschke2003bregman}. Here, we study a concrete class of problems defined on the space of measures which exhibit specific features. This problem-specific approach for infinite dimensional problems has proved fruitful for the analysis of gradient methods for least-squares (e.g.~\cite{yao2007early,dieuleveut2017stochastic} and references therein), for partly smooth problems~\citep{liang2014local} and for the Iterative Soft Thresholding Algorithm (ISTA) in Hilbert spaces~\citep{bredies2008linear,garrigos2020thresholding}. 

The latter is close to our subject since ISTA is in fact an instance of PGM with the $L^2$-divergence -- and FISTA~\citep{beck2009fast} is analogous to APGM with the $L^2$-divergence. These prior works perform the analysis in a Hilbert space, while we work in the space of measures or in $L^1$, which are non-reflexive Banach space. This is also the context of~\citet{chambolle2021fista} who, for a modified version of FISTA, obtained in particular the convergence rate of Table~\ref{table:rates} when $p=2$ and $q=1$, and also discuss discretization. Our analysis allows to compare various algorithms and shows that FISTA is always slower than APGM with the hyperbolic entropy geometry~\citep{ghai2020exponentiated} when the solution is truly sparse, see the rates in Table~\ref{table:rates}. This is clearly observed in numerical experiments and suggests that the latter forms a stronger baseline for our class of problems.

To prove our upper bounds, we use the abstract proof strategy proposed by~\citet{jacobs2019solving}, recalled in Section~\ref{sec:general}. In that paper, the authors study different classes of problems (total variation denoising of image and earth mover's distance) under Hilbertian geometry.

\paragraph{Notation} The domain of a function $F:V\to \RR\cup \{+\infty\}$ is $\dom F = \{ x \in V\;;\; F(x)<+\infty\}$.
Throughout, $\Theta$ is a compact $d$-dimensional manifold, $\Mm(\Theta)$ (resp. $\Mm_+(\Theta)$) is the set of finite signed (resp.~nonnegative) Borel measures on $\Theta$ and  $\Pp(\Theta)$ is the set of Borel probability measures. For $\mu\in \Mm(\Theta)$, $\Vert \mu\Vert$ is its total variation norm. For a Hilbert space $\Ff$,  $\Cc^p(\Theta;\Ff)$ is the set of $p$-times continuously differentiable functions from $\Theta$ to $\Ff$. $\Lip(f)$ is the Lipschitz constant of a function $f$. For $\tau\in\Pp(\Theta)$ and $p\geq 1$, $L^p(\tau)$ is the space of (equivalence classes of) measurable functions $f:\Theta\to \RR$ such that $\int_\Theta \vert f(\theta)\vert^p\d\tau(\theta)<+\infty$ or, for $p=+\infty$, such that $\vert f\vert$ is $\tau$-almost everywhere bounded by some $K>0$. The asymptotic notation $a(k) \lesssim b(k)$ means that there exists $c>0$ independent of $k$ such that $a(k)\leq c \cdot b(k)$, and $a(k)\asymp b(k)$ means [$a\lesssim b$ and $b \lesssim a$].

\section{Strategy to derive upper bounds on convergence rates}\label{sec:general}
This section introduces the strategy, adapted from~\citep{jacobs2019solving}, that we adopt to derive upper bounds on the convergence rates. 

Let $F$ be a lower bounded convex function defined on a real vector space. Suppose that an iterative method designed to minimize $F$ initialized at $x_0\in \dom F$ generates a sequence $x_1,x_2,\dots \in \dom F$ that satisfies
\begin{align}\label{eq:general-guarantee}
F(x_k) - F(x) \leq \alpha_k\cdot D(x, x_0), \quad \forall x\in \dom F,\quad \forall k\geq 1,
\end{align}
where $(\alpha_k)_{k\in \NN^*}$ is a positive sequence converging to $0$ and $D$ is a \emph{divergence}, i.e.\ $D(x,x_0)\in [0,+\infty]$ and $D(x_0,x_0)=0$. Most first order methods enjoy guarantees of this form. For instance, PGM and APGM enjoy such guarantees with respectively $\alpha_k \lesssim k^{-1}$ and $\alpha_k \lesssim k^{-2}$ under suitable assumptions, see Section~\ref{sec:PMD}. 

While Eq.~\eqref{eq:general-guarantee} is sometimes the endpoint of the analysis in the optimization literature, this is our starting point: we are interested in cases where {for any minimizer $x^*$ of $F$, the quantity $D(x^*, x_0)$ is infinite, which makes the bound of Eq.~\eqref{eq:general-guarantee} inapplicable. Even if there exists a quasi-minimizer $x$ with a small suboptimality gap and satisfying $D(x,x_0)<+\infty$,} choosing a fixed $x$ independent of $k$ in Eq.~\eqref{eq:general-guarantee} leads to a poor upper bound which often does not match the observed practical behavior.
Instead, we should exploit the flexibility offered by the guarantee of Eq.~\eqref{eq:general-guarantee} and choose a different reference point at each time step\footnote{{In Section~\ref{sec:upper-bounds}, these reference points will be constructed as mollifications of the optimal measure $\mu^*$.}}. This means that we reformulate the guarantee in the equivalent form:
\begin{align}\label{eq:general-rate}
F(x_k) - \inf F \leq \psi(\alpha_k) \quad\text{where}\quad \psi(\alpha) \coloneqq \inf_{x} \Big\{F(x) - \inf F + \alpha  D(x, x_0)\Big\}. 
\end{align}
Studying $\psi$ is particularly fruitful to understand optimization algorithms satisfying Eq.~\eqref{eq:general-guarantee}. In particular, its behavior at $0$ determines the asymptotic convergence rate. This function can be interpreted as the value at $x_0$ of the (Bregman) Moreau envelope~\citep{kan2012moreau} of $(F- \inf F)$ with regularization parameter $\alpha$, and it intervenes in many area of applied mathematics. For instance, when $D(x,x_0)$ is a squared Hilbertian norm, $\psi$ has a variety of behaviors which characterize the performance of kernel ridge regression in machine learning (see e.g.~\cite[Chap.~7.5]{bach2021learning}).
Before we head in a more concrete setting, let us gather a few relevant properties of the function $\psi$ that hold in full generality.

 \begin{figure}
 \centering
\begin{tikzpicture}
\node[inner sep=0pt] (russell) at (0,0)
    {\includegraphics[scale=0.8,trim={4cm 23cm 5cm 1cm},clip]{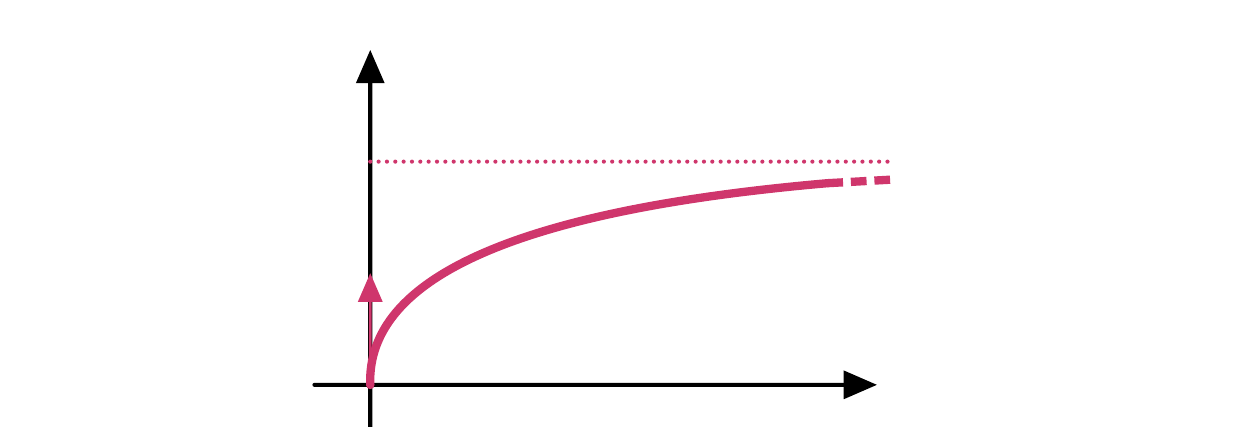}};
\node (0) at (-2.3,-1.7) {$0$};
\node (0) at (1.4,-1.7) {$\alpha$};
\node (0) at (-3.3,0.4) {$F(x_0)-\inf F$};
\node (0) at (-2.025,0.42) {$\bullet$};
\node[color=purple] (0) at (2,-0.2) {$\psi(\alpha)$};
\end{tikzpicture}
\caption{Shape of $\psi$ defined in Eq.~\eqref{eq:general-rate} in our situation of interest where $D(x_k,x_0)$ explodes for any minimizing sequence $(x_k)_{k\in\NN}$ (Prop.~\ref{prop:general-psi}). When an optimization method satisfies Eq.~\eqref{eq:general-guarantee} for some sequence $(\alpha_k)_{k\in \NN}$ then $\psi(\alpha_k)$ bounds its convergence rate in objective values.}
\label{fig:shape}
\end{figure}

\begin{proposition}\label{prop:general-psi}
Assume that $F(x_0)<+\infty$ and $D(\cdot,x_0)\geq 0$ with equality at $x_0$. Then the function $\psi$ is concave on $[0,+\infty[$ and satisfies $0 = \psi(0)\leq \psi(\alpha)\leq F(x_0)-\inf F$.  Moreover,

 (i) $\psi$ is right-continuous at $0$ if and only if there exists a minimizing sequence $(x_k)_{k\in \NN}$ such that $F(x_k)\to \inf F(x)$ and $D(x_k,x_0)<+\infty$, $\forall k\in \NN$ ;

(ii) $\psi'(0)\coloneqq \lim_{\alpha\to 0^+} \psi(\alpha)/\alpha$ is finite if and only if there exists a minimizing sequence $(x_k)_{k\in \NN}$ such that $F(x_k)\to \inf F(x)$ and $D(x_k,x_0)$ is bounded.
\end{proposition}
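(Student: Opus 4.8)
The plan is to work throughout with the shifted objective $G \coloneqq F - \inf F \geq 0$, so that $\psi(\alpha) = \inf_x \{ G(x) + \alpha D(x,x_0) \}$. For each fixed $x$, the map $\alpha \mapsto G(x) + \alpha D(x,x_0)$ is affine in $\alpha$ with nonnegative slope $D(x,x_0) \geq 0$; hence $\psi$, being a pointwise infimum of affine (in particular concave) functions, is concave on $[0,+\infty[$, and it is nondecreasing because enlarging $\alpha$ can only increase each term of the infimum. The elementary bounds then follow at once: evaluating the infimum at $x = x_0$ gives $\psi(\alpha) \leq G(x_0) + \alpha D(x_0,x_0) = F(x_0) - \inf F$; nonnegativity of both $G$ and $\alpha D(\cdot,x_0)$ gives $\psi(\alpha) \geq 0$; and $\psi(0) = \inf_x G(x) = 0$.

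For (i), since $\psi$ is nondecreasing the one-sided limit $L \coloneqq \lim_{\alpha \to 0^+} \psi(\alpha) = \inf_{\alpha > 0}\psi(\alpha) \geq 0$ exists, and right-continuity at $0$ is exactly the statement $L = 0$. For the ``if'' direction, given a minimizing sequence with $G(x_k) \to 0$ and $d_k \coloneqq D(x_k,x_0) < +\infty$, I fix $\eps > 0$, pick $k$ with $G(x_k) < \eps/2$, and use $\psi(\alpha) \leq G(x_k) + \alpha d_k < \eps$ for all small enough $\alpha$, which forces $L \leq \eps$ and hence $L = 0$. For the converse, from $L = 0$ I extract, for each $n$, a parameter $\alpha_n > 0$ with $\psi(\alpha_n) < 1/n$ and then (by definition of the infimum) a point $x_n$ with $G(x_n) + \alpha_n D(x_n,x_0) < 2/n$; this yields $G(x_n) \to 0$ and, since $\alpha_n > 0$, $D(x_n,x_0) < +\infty$ for every $n$.

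For (ii), I first record that concavity of $\psi$ together with $\psi(0)=0$ makes $\alpha \mapsto \psi(\alpha)/\alpha$ nonincreasing, so that $\psi'(0) = \lim_{\alpha\to 0^+}\psi(\alpha)/\alpha = \sup_{\alpha>0}\psi(\alpha)/\alpha \in [0,+\infty]$ is well defined. The ``if'' direction is the cleanest: from a minimizing sequence with $D(x_k,x_0) \leq M$ I bound $\psi(\alpha) \leq \inf_k\{G(x_k) + \alpha M\} = \alpha M$, whence $\psi'(0) \leq M < +\infty$. For the converse, finiteness $\psi'(0) = C$ gives $\psi(\alpha) \leq C\alpha$ for all $\alpha$; choosing $x_\alpha$ with $G(x_\alpha) + \alpha D(x_\alpha,x_0) \leq \psi(\alpha) + \alpha \leq (C+1)\alpha$ and letting $\alpha \to 0$ along a sequence produces a minimizing sequence with $G(x_\alpha) \leq (C+1)\alpha \to 0$ and the uniform bound $D(x_\alpha,x_0) \leq C+1$.

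The argument is essentially bookkeeping around the definition of the infimum; the only points requiring care are the extended-real-valued nature of $D$ (so that the product $\alpha D(\cdot,x_0)$ must be read with the convention that it vanishes at $\alpha=0$, consistent with $\psi(0) = \inf_x G(x)$) and the extraction of a valid reference point from an approximate infimum in each converse direction. I expect the main — though still minor — obstacle to be ensuring in (ii) that the slack chosen in the defining infimum decays fast enough to keep $G(x_\alpha) \to 0$ while the prefactor $\alpha$ in $\alpha D(x_\alpha,x_0)$ simultaneously cancels to leave a uniform bound on $D(x_\alpha,x_0)$; taking the slack proportional to $\alpha$ makes both occur at once.
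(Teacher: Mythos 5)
Your proof is correct and follows essentially the same route as the paper's: concavity via the pointwise infimum of affine functions, the bounds via the candidate $x_0$, the ``if'' directions by plugging in elements of the minimizing sequence, and the converses by extracting quasi-minimizers with a slack decaying with $\alpha$ (you take slack $\alpha$ where the paper takes $\alpha_k^2$, an immaterial difference). You also write out part (i) explicitly, which the paper omits as ``similar and simpler,'' and your version is fine.
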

\begin{proof}
The function $\psi$ is concave as the pointwise infimum of affine functions. The lower bound is immediate and the upper bound is obtained by taking $x_0$ as a candidate in the infimum. Let us prove (ii) (the proof of (i) follows a similar scheme and is simpler). By concavity, the limit defining $\psi'(0)$ always exists and belongs to ${]0,+\infty]}$. If a sequence $(x_k)$ exists as in the statement, then for any $\alpha>0$, {pick $x_k$ such that $F(x_k)-\inf F\leq \alpha^2$ and then $\psi(\alpha)\leq \alpha D(x_k,x_0) + \alpha^2$ so $\psi(\alpha)/\alpha \leq D(x_k,x_0) +\alpha$}. Since the upper bound is uniformly bounded as $\alpha\to 0$ it follows that $\psi'(0)$ is finite. Conversely, if $\psi'(0)$ is finite, take a decreasing sequence $(\alpha_k)$ that converges to $0$ and let $(x_k)$ be a sequence of quasi-minimizers for Eq.~\eqref{eq:general-rate} satisfying $F(x_k) - \inf F +\alpha_k D(x_k,x_0)\leq \psi(\alpha_k) +\alpha^2_k$. {By dividing by $\alpha_k$, we see that $(F(x_k)-\inf F)/\alpha_k + D(x_k,x_0)$ is bounded as $k\to \infty$ which implies that $F(x_k)\to \inf F$ and $D(x_k,x_0)$ is bounded.} 
\end{proof}

Figure~\ref{fig:shape} illustrates the general shape of the function $\psi$. { Observe that if $\psi'(0)<+\infty$ then the bound of Eq.~\eqref{eq:general-rate} is $F(x_k)-\inf F\leq\psi'(0)\alpha_k+o(\alpha_k)$ and thus the convergence rate given by Eq.~\eqref{eq:general-guarantee} is not modified (only the constant changes).
However, Proposition~\ref{prop:general-psi} shows that when any minimizing sequence $(x_k)$ satisfies $D(x_k,x_0)\to +\infty$, then $\psi'(0)=+\infty$ and thus the \emph{convergence rate} is modifed. This is the situation we are interested in in this paper, in the context of optimization in the space of measures}.

\section{Gradient methods for optimization in the space of measures}\label{sec:optim-measure}
In the rest of this paper, we apply the general method of Section~\ref{sec:general} to a class of optimization problems in the space of measures where it leads to a zoo of -- often tight -- convergence rates. 

\subsection{Objective function}\label{sec:objective}
Let $\Theta$ be a compact Riemannian manifold without boundary, with distance $\dist$ and with a reference probability measure $\tau \in \Pp(\Theta)$ that is proportional to the volume measure. We consider an objective function on the space of measures $\bar F:\Mm(\Theta)\to \RR\cup \{+\infty\}$ of the form
\begin{align*}
\bar F(\mu) \coloneqq \bar G(\mu) + \bar H(\mu)&&\text{where}&& \bar G(\mu) \coloneqq R\left(\int \Phi\d \mu\right).
\end{align*}
Typically, $\bar G$ is a data-fitting term and $\bar H$ a regularizer. We make the following assumptions, where $\iota_C$ is the convex indicator of a convex set $C$ and $\lambda \geq 0$ a regularization parameter:
\begin{itemize}
\item[\textbf{(A1)}] $\Phi \in \Cc^0(\Theta;\Ff)$ where $\Ff$ is a Hilbert space, $R:\Ff\to \RR$ is convex and differentiable with a Lipschitz gradient $\nabla R$, and $\bar H$ is a sum of functions from the following list: $\iota_{\Pp(\Theta)}$, $\iota_{\Mm_+(\Theta)}$, $\lambda \Vert \mu\Vert $ and $\iota_{\{\mu\;;\; \lambda\Vert \mu\Vert\leq 1 \}}$. 
\end{itemize}
One specific property of $\bar H$ that we use in our proof is that it should be non-decreasing under convolutions by a probability kernel, but we prefer to work with these specific instances rather than giving abstract conditions. We finally denote by $F:L^1(\tau)\to \RR\cup \{+\infty\}$ the function defined, for $f\in L^1(\tau)$, by
\begin{align*}
F(f )\coloneqq \bar F(f\tau),
\end{align*}
and similarly $H(f)\coloneqq \bar H(f\tau)$ and $G(f) \coloneqq \bar G(f\tau)$ so that $F = G + H$. These ``bar'' notations convey the idea that $\bar F,\bar G,\bar H$ are the lower-semicontinuous (l.s.c.) extensions of $F,G,H$ for the weak* topology induced by $\Cc^0(\Theta)$ on $\Mm(\Theta)$. 

Here are examples of problems that fall under this setting:
\begin{itemize}
\item (Sparse deconvolution) The goal is, given a signal $y^*\in L^2(\tau)$, to find a sparse measure $\mu$ such that the convolution of $\mu$ with a filter $\phi \in \Cc(\Theta)$ approximately recovers $y^*$. Here the domain is typically the $d$-dimensional torus $\Theta = \mathbb{T}^d$ endowed with the Lebesgue measure $\tau$ and the objective is~\citep{de2012exact,candes2014towards}
\begin{align*}
\bar F(\mu) \coloneqq \int_{\Theta} \Big\vert \int_{\Theta} \phi(\theta_1-\theta_2)\d\mu(\theta_2) - y^*(\theta_1)\Big\vert^2 \d\tau(\theta_2)  + \lambda \Vert \mu\Vert.
\end{align*}
Adding the nonnegativity constraint $\iota_{\Mm_+(\Theta)}$ is also relevant in certain applications.
\item (Two-layer relu neural networks). The goal is, given $n$ observations $(x_i,y_i)\in \RR^d\times \RR$, to find a regressor written as a linear combination of simple ``ridge'' functions. Consider a loss $\ell:\RR^2\to \RR$ convex and smooth in its second argument, let $\phi(s)=(s)_+$ and let
\begin{align*}
\bar F(\mu) \coloneqq \frac1n \sum_{i=1}^n \ell\Big(y_i, \int_{\Theta} \phi([x_i;1]^\top \theta)\d\mu(\theta) \Big) + \bar H(\mu).
\end{align*}
Key differences with the previous setting are that $\Theta = \SS^{d}$ is the sphere, with $d$ potentially large, and that the object that is truly sought after is the regressor $x\mapsto \int \phi([x;1]^\top \theta)\d\mu(\theta)$ rather than the measure $\mu$. Typical choices for $\ell$ are the logistic loss $\ell(y,z)= \log(1+\exp(-yz))$ when $y_i\in \{-1,1\}$ or the square loss $\ell(y,z)=\frac12\vert y-z\vert^2$. The signed setting with regularization $\bar H(\mu)=\lambda \Vert \mu\Vert$ is the most common one~\citep{bengio2006convex,bach2017breaking} but the regularization $\iota_{\Pp(\Theta)}$ also appears in the context of max-margin problems~\citep{chizat2020implicit}. 
\end{itemize}

The following smoothness lemma will be useful to analyze optimization algorithms and is analogous to the usual ``Lipschitz gradient'' property in convex optimization. Since the dual of $(\Mm(\Theta),\Vert \cdot\Vert)$ is a bit exotic, we avoid using the notion of gradient at all.
\begin{lemma}[Smoothness]\label{lem:smoothness} Under Assumption (A1), if $\Phi\in \Cc^p(\Theta,\Ff)$ for $p\in \NN$, then the differential of $\bar G$ at $\mu \in \Mm(\Theta)$ can be represented by the function $\bar G'[\mu]\in \Cc^p(\Theta,\RR)$ defined by
\begin{align*}
\bar G'[\mu](\theta) = \Big\langle \nabla R\Big(\int \Phi \d\mu \Big), \Phi(\theta)\Big\rangle_\Ff
\end{align*}
in the sense that it holds $\bar G(\mu+\nu) - \bar G(\mu) = \int \bar G'[\mu](\theta) \d\nu (\theta) + o(\Vert \mu -\nu \Vert)$. Moreover, $\mu \mapsto \bar G'[\mu]$ is Lipschitz continuous as a function from $\Mm(\Theta)$ to $\Cc^p(\Theta,\RR)$. The following smoothness inequality holds with $\Lip(\bar G')\leq \Vert \Phi\Vert_\infty^2\cdot \Lip(\nabla R)$ and for all $\mu,\nu \in \Mm(\Theta)$,
\begin{align*}
0\leq \bar G(\nu) - \bar G(\mu) - \int \bar G'[\mu]\d[\nu-\mu] \leq \frac12 \Lip(\bar G')\Vert \nu-\mu\Vert^2.
\end{align*}
Those results hold true when replacing $(\Mm(\Theta),\bar G(\mu),\bar G'[\mu])$ by $(L^1(\tau),G(f),G'[f]\coloneqq \bar G'[f\tau])$.  
\end{lemma}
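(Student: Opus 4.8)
The plan is to exploit the fact that $\bar G$ factors through the Hilbert space $\Ff$: introduce the bounded linear operator $A\colon \Mm(\Theta)\to\Ff$, $A\mu \coloneqq \int\Phi\dd\mu$, so that $\bar G = R\circ A$. Continuity of $\Phi$ on the compact manifold $\Theta$ gives $\Vert\Phi\Vert_\infty<\infty$ and hence the operator bound $\Vert A\mu\Vert_\Ff\le\int\Vert\Phi\Vert_\Ff\dd\vert\mu\vert\le\Vert\Phi\Vert_\infty\Vert\mu\Vert$, i.e.\ $A$ has operator norm at most $\Vert\Phi\Vert_\infty$ from $(\Mm(\Theta),\Vert\cdot\Vert)$ to $\Ff$. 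This factorization is the crux: it lets me sidestep the exotic dual of $\Mm(\Theta)$ altogether and reduce every claim to a second-order Taylor estimate for $R$ on $\Ff$, the operator norm of $A$ converting $\Ff$-norms back into total variation norms.

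For the differential, I would apply the chain rule formally and then justify it. For fixed $\mu$, set $v\coloneqq\nabla R(A\mu)\in\Ff$; then $\bar G'[\mu](\theta)=\langle v,\Phi(\theta)\rangle_\Ff$ is $\Cc^p$ because $w\mapsto\langle v,w\rangle_\Ff$ is a bounded linear functional composed with the $\Cc^p$ map $\Phi$ (and, in local charts, $\partial^\alpha\bar G'[\mu](\theta)=\langle v,\partial^\alpha\Phi(\theta)\rangle_\Ff$, which I will reuse below). The first-order expansion then follows from the Lipschitz-gradient Taylor bound $\vert R(y+h)-R(y)-\langle\nabla R(y),h\rangle_\Ff\vert\le\tfrac12\Lip(\nabla R)\Vert h\Vert_\Ff^2$ applied with $y=A\mu$ and $h=A\nu$: since $\Vert A\nu\Vert_\Ff\le\Vert\Phi\Vert_\infty\Vert\nu\Vert$, the remainder is $O(\Vert\nu\Vert^2)=o(\Vert\nu\Vert)$, while $\langle\nabla R(A\mu),A\nu\rangle_\Ff=\int\bar G'[\mu]\dd\nu$ by definition of $A$.

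For Lipschitz continuity of $\mu\mapsto\bar G'[\mu]$, I would write $\bar G'[\mu](\theta)-\bar G'[\mu'](\theta)=\langle\nabla R(A\mu)-\nabla R(A\mu'),\Phi(\theta)\rangle_\Ff$ and combine Cauchy--Schwarz with the Lipschitz bound on $\nabla R$ and the operator norm of $A$; taking the supremum over $\theta$ gives $\Vert\bar G'[\mu]-\bar G'[\mu']\Vert_\infty\le\Lip(\nabla R)\Vert\Phi\Vert_\infty^2\Vert\mu-\mu'\Vert$, exactly the asserted constant $\Lip(\bar G')\le\Vert\Phi\Vert_\infty^2\Lip(\nabla R)$, and replacing $\Phi$ by its derivatives $\partial^\alpha\Phi$ in the same estimate upgrades this to Lipschitz continuity into $\Cc^p(\Theta,\RR)$. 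For the two-sided smoothness inequality, the lower bound is convexity of $\bar G=R\circ A$ (apply the gradient inequality for the convex differentiable $R$ at $A\mu$ and unfold the pairing into $\int\bar G'[\mu]\dd[\nu-\mu]$), while the upper bound is the descent lemma $R(A\nu)\le R(A\mu)+\langle\nabla R(A\mu),A(\nu-\mu)\rangle_\Ff+\tfrac12\Lip(\nabla R)\Vert A(\nu-\mu)\Vert_\Ff^2$ together with $\Vert A(\nu-\mu)\Vert_\Ff\le\Vert\Phi\Vert_\infty\Vert\nu-\mu\Vert$.

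Finally, the $L^1(\tau)$ statements require no new work: the embedding $f\mapsto f\tau$ is isometric from $(L^1(\tau),\Vert\cdot\Vert_{L^1(\tau)})$ into $(\Mm(\Theta),\Vert\cdot\Vert)$ since $\Vert f\tau\Vert=\int\vert f\vert\dd\tau=\Vert f\Vert_{L^1(\tau)}$, so composing $A$ with this embedding and repeating the argument verbatim yields the representation $G'[f]=\bar G'[f\tau]$ (paired against $g\in L^1(\tau)$ via $\int\bar G'[f\tau]\,g\dd\tau$) and the same smoothness inequality with $\Vert\cdot\Vert$ replaced by $\Vert\cdot\Vert_{L^1(\tau)}$. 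I do not anticipate a genuine difficulty; the only point to treat with care is the conceptual one already flagged in the excerpt—representing the differential by a $\Cc^p$ function rather than by an element of the ill-behaved dual $\Mm(\Theta)^*$—which the factorization through $\Ff$ makes transparent, the inner-product structure being precisely what guarantees that the representative inherits the $\Cc^p$ regularity of $\Phi$.
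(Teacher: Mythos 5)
Your proof is correct and follows essentially the same route as the paper's: both exploit the factorization $\bar G = R\circ A$ through the bounded linear map $A\mu=\int\Phi\dd\mu$ to get the representation of the differential, prove the $\Cc^p$ Lipschitz bound by Cauchy--Schwarz with the constant $\Vert\Phi\Vert_{\Cc^p}\Lip(\nabla R)\Vert\Phi\Vert_\infty$, and transfer everything to $L^1(\tau)$ via the isometry $f\mapsto f\tau$. The only cosmetic difference is in the smoothness inequality, where you apply the Hilbert-space descent lemma to $R$ and push it through $A$, while the paper bounds the one-dimensional integral $\int_0^1\int\big(\bar G'[\mu+t(\nu-\mu)]-\bar G'[\mu]\big)\dd[\nu-\mu]\dd t$ directly in $\Mm(\Theta)$ --- two standard, equivalent derivations yielding the same constant $\Vert\Phi\Vert_\infty^2\Lip(\nabla R)$.
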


\begin{proof}
For the first part, the differentiability of $R$ implies that
\begin{align*}
\bar G(\mu+\nu) - \bar G(\mu) &= \Big\langle \nabla R\Big(\int \Phi \d\mu\Big), \int \Phi \d \nu \Big\rangle_\Ff + o\Big(\Big\Vert \int \Phi \d\nu\Big\Vert_{\Ff}\Big) = \int \bar G'[\mu] \d\nu+ o(\Vert \Phi\Vert_\infty \Vert \nu \Vert).
\end{align*}
For the regularity of $\mu \mapsto \bar G'[\mu]$, we have for $\mu,\nu\in \Mm(\Theta)$,
\begin{align*}
\Vert \bar G'[\mu] - \bar G'[\nu]\Vert_{\Cc^p} \leq \Vert \Phi\Vert_{\Cc^p(\Theta,\Ff)} \cdot  \Lip(\nabla R)\cdot \Vert \Phi\Vert_\infty\cdot \Vert \mu -\nu\Vert.
\end{align*}
The smoothness inequality can be shown by bounding a $1$-dimensional integral as in the Euclidean case~\cite[Thm.~2.1.5]{nesterov2003introductory}. Finally, $L^1(\tau) \ni f \mapsto f\tau \in \Mm(\Theta)$ is an isometry, so those results hold mutandis mutatis in $L^1(\tau)$.
\end{proof}

\subsection{Bregman divergences} 
Let us consider $\eta:\RR\to [0,\infty]$ a differentiable  function {that we will refer to as the \emph{distance-generating} function}. For $f\in L^1(\tau)$ we write $\eta(f)\coloneqq \eta \circ f$ and we define
\begin{align*}
\bar \eta(f) \coloneqq \int_\Theta \eta(f(\theta))\dd\tau(\theta) = \int \eta(f)\d\tau.
\end{align*}
Let $D_\eta$ (resp. $D_{\bar \eta}$) be the Bregman divergence associated to $\eta$ (resp. $\bar \eta$), given for $f,g\in L^1(\tau)$ by
\begin{align*}
D_\eta(a,b) \coloneqq \eta(a) - \eta(b) - \eta'(b)\big(a-b\big)
&&\text{and}&&
D_{\bar \eta}(f,g) \coloneqq \int_\Theta D_\eta(f(\theta),g(\theta))\d\tau(\theta).
\end{align*}
We consider the following assumptions on the distance-generating function $\eta$:
\begin{itemize}
\item[\textbf{(A2)}] $\eta:\RR\to [0,\infty]$ is strictly convex, l.s.c., continuously differentiable in $\mathrm{int} (\dom \eta)$, such that $\eta'(\mathrm{int} (\dom \eta)) = \RR$ and {for any $c>0$ it holds $\eta(cx)\asymp \eta(x)$ as $x\to \infty$}. Moreover, either:
\begin{itemize}
\item[$\textbf{(A2)}_+$] $\dom \eta = [0,+\infty[$ and $\eta(1)=\eta'(1)=0$, or
\item[$\textbf{(A2)}_\pm$] $\dom \eta = \RR$, $\eta$ is even and $\eta(0)=\eta'(0)=0$.
\end{itemize}
\end{itemize}
Specifying the values of $\eta$ and $\eta'$ at a point in $\mathrm{int}(\dom \eta)$ is just for convenience and is not restrictive since $D_\eta$ is not affected by affine perturbations of $\eta$. {Also, the assumption $\eta(cx)\asymp \eta(x)$ is only needed to simplify the statement of the results}. Under assumption $\text{(A2)}_+$, we have that $\eta'(0)\coloneqq \lim_{t\to 0^+}\eta(t)/t = -\infty$ which automatically enforces an nonnegativity constraint in the methods in the next section.

Here are examples of distance-generating functions that fall under these assumptions:
\begin{itemize}
\item (Power functions $\eta_{p}$). Defined on $\RR$ for $p> 1$ by $\eta_p(s) \coloneqq  \frac{\vert s\vert^p}{p(p-1)} $, satisfy $\text{(A2)}_\pm$;
\item (Shannon entropy $\eta_{\mathrm{ent}}$). Defined on $\RR_+$ by $\eta_{\mathrm{ent}}(s)\coloneqq s\log(s)-s+1$, satisfies $\text{(A2)}_+$;
\item (Hyperbolic entropy $\eta_{\mathrm{hyp}}$). Defined on $\RR$ by $\eta_{\mathrm{hyp}} \coloneqq s \arcsinh (s/\beta)-\sqrt{s^2+\beta^2}+\beta$ with $\beta>0$, satisfies $\text{(A2)}_\pm$ (introduced by~\citep{ghai2020exponentiated}).
\end{itemize}

\begin{figure}
\centering
\includegraphics[width=\linewidth]{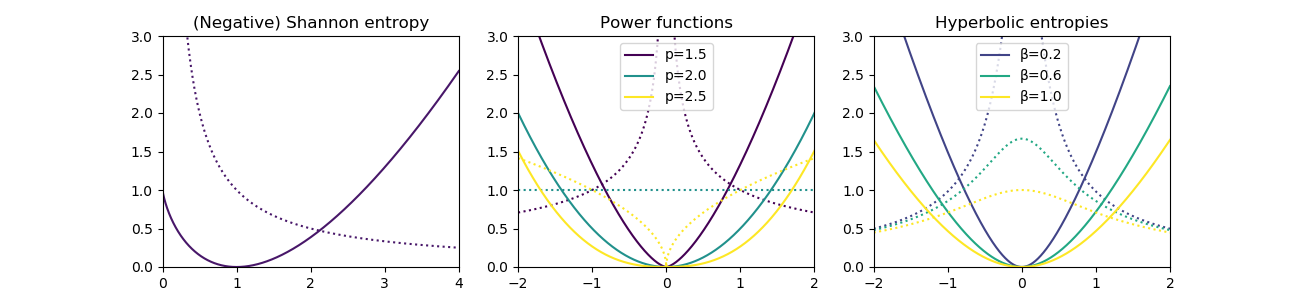}
\caption{Divergence-generating functions (plain) and their second-order derivatives (dashed).}\label{fig:divergences}
\end{figure}

When $\eta$ is smooth, it holds $D_\eta(a,b)=\eta''(b)\cdot \Vert a-b\Vert^2/2 + o(\Vert a-b\Vert^2)$ so locally, $D_\eta$ is equivalent to a squared Riemannian metric on the real axis given by $\eta''$. For the examples listed above, it holds $\eta''_p(s)=\vert s\vert^{p-2}$, $\eta''_\mathrm{ent}(s)=s^{-1}$ and $\eta''_\mathrm{hyp}(s)=(s^2+\beta^2)^{-1/2}$, see Figure~\ref{fig:divergences} for an illustration.
The hyperbolic entropy $\eta_{\mathrm{hyp}}$ can be interpreted as a ``signed'' version of $\eta_{\mathrm{ent}}$ (see Proposition~\ref{prop:reparametrized} for a precise version of this remark).

The next lemma states the strong convexity of these divergences with respect to the $L^1(\tau)$ norm, which is needed in the next section. It is a generalization of Pinsker inequality, recovered when $K=1$ and with $\eta_{\mathrm{ent}}$. Notice that when $p<2$, the bound worsens as the norm increases.
{
\begin{lemma}[Strong convexity of Bregman divergences]\label{lem:strong_convexity}
Assume that $f,g$ have $L^1(\tau)$-norm bounded by $K$. Then for $p \in {]1,2]}$,
\begin{align*}
D_{\bar \eta_p}(f,g) \geq \frac{K^{p-2}}{2} \Vert f-g\Vert^2_{L^1(\tau)}. 
\end{align*}
The inequality also holds for $\eta_{\mathrm{ent}}$ with $p=1$ (assuming $f,g\geq 0$). Finally for $\eta_{\mathrm{hyp}}$, it holds  $$D_{\bar \eta_{\mathrm{hyp}}}(f,g) \geq \frac{(K+\beta)^{-1}}{2} \Vert f-g\Vert^2_{L^1(\tau)}.$$ 
\end{lemma}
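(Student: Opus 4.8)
The plan is to generalise the classical proof of Pinsker's inequality: I would reduce each of the three claims to a single one-dimensional estimate on the Bregman kernel, and then assemble it by one Cauchy--Schwarz inequality. Starting from the integral form of the Taylor remainder,
\begin{align*}
D_\eta(a,b) = (a-b)^2\, h(a,b), \qquad h(a,b)\coloneqq \int_0^1 (1-t)\,\eta''\big((1-t)b+ta\big)\dd t,
\end{align*}
which is valid since each $\eta$ in the statement is $\Cc^2$ on the interior of its domain, one has $D_{\bar\eta}(f,g)=\int_\Theta (f-g)^2 h(f,g)\dd\tau$. Applying Cauchy--Schwarz with the weight $h(f,g)$ (restricting to $\{f\neq g\}$, where the integrand is supported) gives
\begin{align*}
\Big(\int_\Theta |f-g|\dd\tau\Big)^2 \le \Big(\int_\Theta (f-g)^2 h(f,g)\dd\tau\Big)\Big(\int_\Theta h(f,g)^{-1}\dd\tau\Big) = D_{\bar\eta}(f,g)\int_\Theta h(f,g)^{-1}\dd\tau.
\end{align*}
Thus everything reduces to the single bound $\int_\Theta h(f,g)^{-1}\dd\tau\le 2K^{2-p}$ for $\eta_p$, $\le 2K$ for $\eta_{\mathrm{ent}}$, and $\le 2(K+\beta)$ for $\eta_{\mathrm{hyp}}$.

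To control $h(a,b)^{-1}$ I would first lower bound $\eta''$ along the segment by a convex function of the linear interpolant. Since $|(1-t)b+ta|\le (1-t)|b|+t|a|\eqqcolon \ell(t)$ and $\eta''$ is nonincreasing in $|\cdot|$, one gets $\eta''((1-t)b+ta)\ge \phi(\ell(t))$ with $\phi(x)=x^{p-2}$ for $\eta_p$, $\phi(x)=1/x$ for $\eta_{\mathrm{ent}}$, and (using $\sqrt{s^2+\beta^2}\le |s|+\beta$) $\phi(x)=1/(x+\beta)$ for $\eta_{\mathrm{hyp}}$; crucially $\phi$ is convex in every case. The weight $(1-t)$ integrates to $1/2$ on $[0,1]$, so $\omega(t)\coloneqq 2(1-t)$ is a probability density, and Jensen's inequality applied to the convex $\phi$ yields
\begin{align*}
h(a,b)\ge \tfrac12\int_0^1 \omega(t)\,\phi(\ell(t))\dd t \ge \tfrac12\,\phi\Big(\int_0^1 \omega(t)\ell(t)\dd t\Big) = \tfrac12\,\phi\Big(\tfrac{|a|+2|b|}{3}\Big),
\end{align*}
where the \emph{asymmetric} mean $\tfrac{|a|+2|b|}{3}$ emerges automatically; it is the exact analogue of the weight $2a+4b$ that appears in Pinsker's inequality. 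Hence $h(a,b)^{-1}\le 2/\phi\big(\tfrac{|a|+2|b|}{3}\big)$.

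It then remains to integrate this pointwise bound against $\tau$. For $\eta_{\mathrm{ent}}$ (with $f,g\ge 0$) it reads $h^{-1}\le \tfrac23(a+2b)$, so $\int h^{-1}\dd\tau\le\tfrac23(\Vert f\Vert_{L^1(\tau)}+2\Vert g\Vert_{L^1(\tau)})\le \tfrac23\cdot 3K=2K$; for $\eta_{\mathrm{hyp}}$ one gets $h^{-1}\le\tfrac23(|a|+2|b|+3\beta)$, and since $\tau$ is a probability measure $\int h^{-1}\dd\tau\le \tfrac23(3K+3\beta)=2(K+\beta)$. For $\eta_p$ one has $h^{-1}\le 2\big(\tfrac{|a|+2|b|}{3}\big)^{2-p}$, and because $x\mapsto x^{2-p}$ is concave (as $2-p\in[0,1]$) a second application of Jensen, this time against $\tau$, gives $\int h^{-1}\dd\tau\le 2\big(\tfrac{\Vert f\Vert_{L^1(\tau)}+2\Vert g\Vert_{L^1(\tau)}}{3}\big)^{2-p}\le 2K^{2-p}$; here the normalising factors $3^{-(2-p)}$ and $3^{2-p}$ cancel exactly, which is precisely what makes the constant sharp. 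The degenerate case $p=2$ (where $h\equiv\tfrac12$) is just $\Vert f-g\Vert_{L^1(\tau)}\le\Vert f-g\Vert_{L^2(\tau)}$.

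I expect the one genuinely delicate point to be obtaining the \emph{sharp} constant rather than a bound off by a multiplicative factor. The naive route---replacing $\eta''$ along the segment by its minimum, i.e.\ evaluating at the endpoint of largest modulus---produces only the symmetric weight $\max(|a|,|b|)^{2-p}$, which integrates to $(2K)^{2-p}$ and yields the constant $2^{p-2}K^{p-2}/2$, strictly smaller than the claimed $K^{p-2}/2$; one checks it already misses on the two--bump configuration $f=2K\,\mathbf{1}_A$, $g=2K\,\mathbf{1}_B$ with $A,B$ disjoint of mass $1/2$. Retaining the entire segment through the Taylor--remainder density $\omega=2(1-t)$ and invoking Jensen is exactly what restores the correct asymmetric weight and the sharp constant. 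The remaining verifications---convexity of $\phi$, the monotonicity lower bound on $\eta''$, and the measure--zero and positivity technicalities ensuring $h(f,g)>0$ (and that the inequality is trivial wherever $D_{\bar\eta}(f,g)=+\infty$, e.g.\ for $\eta_{\mathrm{ent}}$ when $g$ vanishes on a set where $f>0$)---are routine.
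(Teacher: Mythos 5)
Your proof is correct and reaches exactly the stated constants, but by a genuinely different route than the paper. The paper argues indirectly: it first shows that the regularized functional $\bar\eta_{p,\beta}$ (with kernel $\eta_{p,\beta}''(s)=\vert s^2+\beta^2\vert^{(p-2)/2}$) is $(K+\beta)^{p-2}$-strongly convex for the $L^1(\tau)$-norm on the $K$-ball --- via a Cauchy--Schwarz with weight $\vert f^2+\beta^2\vert^{(p-2)/2}$ evaluated at a single point $f$, plus one Jensen step --- then invokes the equivalence between strong convexity of a functional and the quadratic lower bound on its Bregman divergence \citep[Thms.~1 and 3]{yu2013strong}, and finally removes the regularization by monotone convergence as $\beta\to0$ (with a separate construction, pinned at $s=1$, to recover $\eta_{\mathrm{ent}}$). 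You instead bound $D_{\bar\eta}(f,g)$ directly: the Taylor-remainder representation $D_\eta(a,b)=(a-b)^2h(a,b)$, a weighted Cauchy--Schwarz in $\theta$, and Jensen in $t$ against the density $2(1-t)$, which produces the asymmetric mean $(\vert a\vert+2\vert b\vert)/3$ --- for $\eta_{\mathrm{ent}}$ this is the classical refinement $D_{\eta_{\mathrm{ent}}}(a,b)\geq 3(a-b)^2/(2(a+2b))$ underlying sharp proofs of Pinsker --- and a final Jensen in $\theta$ for the concave $x\mapsto x^{2-p}$; I checked the computations ($\int_0^1 2(1-t)\ell(t)\dd t=(\vert a\vert+2\vert b\vert)/3$, the convexity of each $\phi$, and the three integrations against $\tau$) and they all go through, including the $p=2$ and $\eta_{\mathrm{hyp}}$ cases. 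What each approach buys: yours is fully self-contained (no external equivalence theorem, no $\beta$-smoothing or limiting argument, since the remainder integral is finite along segments through $0$ because $p-2>-1$), treats the three entropies uniformly through a single convex kernel $\phi$, and makes transparent where the sharp constant comes from; the paper's route is heavier but yields as a by-product the strong convexity of $\bar\eta$ itself on the $L^1(\tau)$-ball, which is the form of the property invoked inside the proof of Proposition~\ref{prop:classical-rates}, whereas your argument delivers only the divergence inequality (which is, however, what Lemma~\ref{lem:strong_convexity} asserts and what is assumed downstream). Two points you flag as routine deserve a line each in a final write-up: justify the representation $D_\eta(a,b)=(a-b)^2\int_0^1(1-t)\eta''((1-t)b+ta)\dd t$ when the segment touches $0$ (for $1<p<2$, $\eta_p'$ is absolutely continuous with locally integrable $\eta_p''$, so integrating twice is licit; for $\eta_{\mathrm{ent}}$ with $a=0<b$ check directly $D(0,b)=b$, and note $D(a,0)=+\infty$ for $a>0$ makes the inequality vacuous there), and state Jensen in $t$ for the extended-real convex $\phi$, with the convention $2/\phi=0$ where $\phi=+\infty$, which only occurs where $a=b=0$ and contributes nothing.
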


\begin{proof}
For $\beta>0$ and $p\in [1,2]$, consider the function $\eta_{p,\beta}:\RR\to \RR_+$ satisfying $\eta_{p,\beta}(0)=\eta_{p,\beta}'(0)=0$ and $\eta_{p,\beta}''(s)=\vert s^2+\beta^2\vert^{\frac{p-2}{2}}$. This function is smooth and, for $p>1$, converges monotonously from below to $\eta_p$ as $\beta\to 0$ (remember that $\eta_p$ satisfies $\eta_p(0)=\eta_p'(0)=0$ and $\eta''_p(s)=|s|^{p-2}\geq \eta''_{p,\beta}(s)$). Our first step is to prove a Pinsker-like inequality for the Bregman divergence $D_{\bar \eta_{p,\beta}}$. For $f,g\in L^1(\tau)$ such that $\Vert g\Vert_{L^1(\tau)}=1$, it holds by the Cauchy-Schwarz inequality
\begin{align*}
1 = \Big(\int \vert g\vert\d\tau\Big)^2 &= \Big(\int \vert g\vert\vert f^2+\beta^2\vert^{\frac{p-2}{4}}\vert f^2+\beta^2\vert^{\frac{2-p}{4}}\d\tau\Big)^2\\
&\leq \Big(\int \vert g\vert^2 \vert f^2+\beta^2\vert^{\frac{p-2}{2}}\d\tau \Big)\Big(\int\vert f^2+\beta^2\vert^{\frac{2-p}{2}}\d\tau\Big).
\end{align*}
But since $2-p\geq 0$, we have $\vert f^2+\beta^2\vert^{\frac{2-p}{2}} \leq (\vert f\vert+\beta)^{2-p}$. Thanks to Jensen's inequality for concave functions (we use $2-p\leq 1$), this leads to
\begin{align*}
\int\vert f^2+\beta^2\vert^{\frac{2-p}{2}}\d\tau \leq \int(\vert f\vert+\beta)^{2-p}\d\tau \leq (\Vert f\Vert_{L^1}+\beta)^{2-p}.
\end{align*}
Combining these inequalities and by homogeneity, it follows that if $\Vert f\Vert_{L^1(\tau)}\leq K$, then $\forall g \in L^1(\tau)$ it holds
\begin{align*}
\int \vert g\vert^2 \vert f^2+\beta^2\vert^{\frac{p-2}{2}} \d\tau\geq  (K+\beta)^{p-2} \Vert g\Vert^2_{L^1(\tau)}.
\end{align*}
This equation shows that $f\mapsto \bar \eta_{p,\beta}(f) \coloneqq \int \eta_{p,\beta}(f)\d\tau$ is $(K+\beta)^{p-2}$-strongly convex for the $L^1(\tau)$-norm over the $L^1(\tau)$-ball of radius $K$, see e.g.~\cite[Thm.~3]{yu2013strong}. This means that for all $f,g\in L^1(\tau)$ of norm smaller than $K$, it holds
$$
\lambda  \bar \eta_{p,\beta}(f) + (1-\lambda)  \bar \eta_{p,\beta}(g) \geq  \bar \eta_{p,\beta}(\lambda f+(1-\lambda)g) +\frac{(K+\beta)^{p-2}}2 \lambda(1-\lambda)\Vert f-g\Vert_{L^1(\tau)}^2,\quad
\forall \lambda \in [0,1].$$
 By the monotone convergence theorem, we have when $p>1$ that $\lim_{\beta \to 0}\bar \eta_{p,\beta}(f)\to \bar \eta_{p}(f)$ and so strong convexity also holds for $\bar \eta_{p}$, taking the pointwise limit of the strong convexity inequality as $\beta\to0$.

It follows~\citep[Thm.~1]{yu2013strong} that, over this ball of functions, the Bregman divergence $D_{\bar \eta_{p,\beta}}$ (for $p\in ]1,2]$ and $\beta>0$ or for $p=1$ and $\beta>0$) satisfies the Pinsker-like inequality 
\begin{align}\label{eq:general-pinsker}
D_{\bar \eta_{p,\beta}}(f,g) \geq \frac{(K+\beta)^{p-2}}{2} \Vert f-g\Vert_{L^1(\tau)}.
\end{align}

Specializing to $p=1$ and $\beta>0$ proves the Lemma for $\eta_{\mathrm{hyp}}$ and specializing to $p>1$ and $\beta=0$ proves the Lemma for $\eta_p$ with $p\in ]1,2]$. It only remains to prove the case $\eta_{\mathrm{ent}}$, i.e.~the classical Pinsker inequality. Note that here we cannot take the limit $\beta\to 0$ because $\eta_{1,\beta}$ does not converge to $\eta_{\mathrm{ent}}$ (in fact it diverges, except at $s=0$). One way to recover this case in an analogous way, is to define instead $ \eta_{1,\beta}:\RR_+\to \RR_+$ as the function satisfying $ \eta_{1,\beta}(1)=  \eta_{1,\beta}'(1)=0$ and $\eta_{1,\beta}''(s)=\vert s^2+\beta^2\vert^{-1/2}$. This function is smooth and converges monotonously to $\eta_{\mathrm{ent}}$ as $\beta \to 0$ and $\bar \eta_{1,\beta}$ is $(K+\beta)^{-1}$ strongly convex. Thus we recover Pinsker's inequality with an analogous argument in the limit $\beta\to 0$.
\end{proof}
}

\subsection{Gradient methods and their classical guarantees}\label{sec:PMD}
We now detail two classical algorithms that enjoy guarantees of the form Eq.~\eqref{eq:general-guarantee} for a large class of composite optimization problems. Algorithm~\ref{alg:PGM} (PGM) is closely related to mirror descent~\citep{nemirovskij1983problem} and is discussed in~\citet{bauschke2003bregman,auslender2006interior}. Algorithm~\ref{alg:APGM} (APGM) is taken from~\citet{tseng2010approximation} who presents it as a generalization of~\citet{auslender2006interior} itself an extension of Nesterov's second method~\citep{nesterov1988approach}.  For the sake of concreteness, we instantiate these algorithms in the context of optimization in the space of measures, where small adaptations have to be made.

\begin{algorithm}
\DontPrintSemicolon
\SetKw{Initialization}{Initialization:}
\SetKw{Output}{Output:}
\Initialization{$f_0\in \dom H$, step-size $s>0$}\;
 \For{k=0,1,\dots}{
 $ f_{k+1} = \arg\min_{f}  \big\{ G(f_k) + \int G'[f_k] (f-f_k)\d\tau + H(f) + \frac{1}{s} D_{\bar \eta}(f,f_k)\big\}$\;
 }
  \Output{$f_{k+1}$}
 \caption{(Bregman) Proximal Gradient Method (PGM)}\label{alg:PGM}
\end{algorithm}

\begin{algorithm}
\DontPrintSemicolon
\SetKw{Initialization}{Initialization:}
\SetKw{Output}{Output:}
\Initialization{$f_0=h_0\in \dom H$, $\gamma_0=1$, step-size $s>0$}\;
 \For{k=0,1,\dots}{
 $ g_k = (1-\gamma_k)f_k + \gamma_k h_k$\;
 $ h_{k+1} = \arg\min_{f}  \big\{ G(g_k) + \langle \nabla G(g_k), f-g_k\rangle + H(f) + \frac{\gamma_k}{s} D_{\bar \eta}(f,h_k)\big\}$\;
 $f_{k+1} = (1-\gamma_k)f_k + \gamma_k h_{k+1}$\;
 $\gamma_{k+1} =\frac12  \big(\sqrt{\gamma_k^4+4\gamma_k^2}-\gamma_k^2\big)$
 }
  \Output{$f_{k+1}$}
 \caption{Accelerated (Bregman) Proximal Gradient Method (APGM)}\label{alg:APGM}
\end{algorithm}

 In the next proposition, we verify that the updates are well-defined under suitable assumptions. Table~\ref{table:update} lists some update formulas which are directly implementable, after discretization.
\begin{proposition}[Well-defined updates]\label{prop:update}
Assume (A1) and (A2). If $\eta'(h_k) \in L^\infty(\tau)$ and $g_k\in L^1(\tau)$, then there exists a unique solution $h_{k+1} \in L^1(\tau)$ to the optimization problem
\begin{align*}
\min_{f\in L^1(\tau)} \int G'[g_k]f\d\tau + H(f) +\frac{1}{s} \Big( \int \eta(f)\d\tau - \int \eta'(h_k)f\d\tau \Big)
\end{align*}
which moreover satisfies $\eta'(h_{k+1}) \in L^{\infty}(\tau)$. It is characterized by the fact that there exists  $\phi \in \partial H(h_{k+1})\subset L^\infty(\tau)$ such that 
\begin{align}\label{eq:optim-condition-update}
G'[g_k] +\frac1s (\eta'(h_{k+1}) - \eta'(h_{k}) ) + \phi =0.
\end{align}

\end{proposition}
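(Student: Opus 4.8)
The plan is to reduce the update to a strictly convex integral functional with a bounded linear perturbation, and then read off the first-order condition by exploiting the separable structure of the objective rather than an abstract subdifferential calculus.

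First I would absorb the two linear terms. By Lemma~\ref{lem:smoothness}, $G'[g_k]\in\Cc^p(\Theta,\RR)\subset L^\infty(\tau)$ since $\Theta$ is compact, and $\eta'(h_k)\in L^\infty(\tau)$ by hypothesis, so $w\coloneqq G'[g_k]-\frac1s\eta'(h_k)$ lies in $L^\infty(\tau)$. The objective rewrites as $J(f)=\int w f\d\tau+\frac1s\bar\eta(f)+H(f)$, which is convex and, because $\eta$ is strictly convex, strictly convex on $\dom J$; moreover every admissible $\bar H$ term is nonnegative, so $H\geq0$. Existence and uniqueness I would then obtain by the direct method in $L^1(\tau)$. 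The key point is that $\eta'(\mathrm{int}(\dom\eta))=\RR$ forces $\eta$ to be superlinear at the ends of its domain, so for every $A>0$ there is $B$ with $\eta(t)\geq A|t|-B$, whence $\bar\eta(f)\geq A\|f\|_{L^1(\tau)}-B$; this dominates the linear term and makes the sublevel sets of $J$ bounded in $L^1(\tau)$. Along a minimizing sequence $(f_n)$ the values $\bar\eta(f_n)$ stay bounded, so by the de la Vall\'ee Poussin criterion $(f_n)$ is uniformly integrable and, by Dunford--Pettis, relatively weakly compact in $L^1(\tau)$. Passing to a subsequence $f_n\weakto h_{k+1}$ and using that $J$ is convex and strongly l.s.c.\ (hence weakly l.s.c.: $\bar\eta$ by Fatou, the $H$-terms because the underlying sets are $L^1$-closed and the norm is continuous, the linear term being weakly continuous as $w\in L^\infty(\tau)$) yields a minimizer, and strict convexity gives uniqueness.

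The core of the statement is the optimality condition together with the gain of $L^\infty$ regularity, and here I would use separability. Let $\sigma\coloneqq(\eta')^{-1}\colon\RR\to\mathrm{int}(\dom\eta)$, a continuous increasing bijection that maps bounded sets to sets bounded inside $\mathrm{int}(\dom\eta)$ (and, under $\text{(A2)}_+$, bounded away from $0$). When $\bar H$ is built only from the pointwise-separable terms ($\lambda\|\mu\|$, $\iota_{\Mm_+(\Theta)}$ and their sum), the integrand decouples and I minimize it pointwise: for a.e.\ $\theta$ the one-dimensional strictly convex problem $\min_{r}\,w(\theta)r+\frac1s\eta(r)+h(r)$ has a unique solution characterized by $\eta'(r)=-s(w(\theta)+\phi(\theta))$ with $\phi(\theta)\in\partial h(r)$ bounded. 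Hence $\eta'(h_{k+1})=-s(w+\phi)\in L^\infty(\tau)$, which is exactly Eq.~\eqref{eq:optim-condition-update}, and $h_{k+1}=\sigma(-s(w+\phi))\in L^\infty(\tau)$. For the two globally-constrained terms ($\iota_{\Pp(\Theta)}$ and the $L^1$-ball) the coupling is a single scalar constraint, handled by one Lagrange multiplier $c$: for fixed $c$ the problem separates as above with $w$ replaced by $w+c$, giving $h_{k+1}^c=\sigma(-s(w+c))$, and the map $c\mapsto\int h_{k+1}^c\d\tau$ is continuous and strictly monotone (dominated/monotone convergence), so it meets the prescribed mass at a unique $c^*$. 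This $c^*$ supplies the constant contribution to $\phi\in\partial H(h_{k+1})\subset L^\infty(\tau)$ and again yields Eq.~\eqref{eq:optim-condition-update}. The converse implication is immediate: Eq.~\eqref{eq:optim-condition-update} states precisely that $0\in\partial J(h_{k+1})$, so convexity makes $h_{k+1}$ a global minimizer.

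The main obstacle is this last step in the non-reflexive space $L^1(\tau)$. Since $\eta$ is superlinear, $\bar\eta$ is lower semicontinuous but nowhere continuous, so constraint-qualification-based sum rules for the subdifferential are delicate; the separable (plus one-scalar-multiplier) reduction circumvents this and, crucially, is what converts the abstract optimality into the pointwise identity $\eta'(h_{k+1})=-s(w+\phi)$, thereby producing the $L^\infty$ regularity of $\eta'(h_{k+1})$ for free through the link map $\sigma$. The one genuinely non-trivial verification is the existence of the multiplier $c^*$ --- equivalently, the absence of a duality gap for the single global constraint --- which rests on the strict monotonicity and the limiting behaviour of $c\mapsto\int\sigma(-s(w+c))\d\tau$ as $c\to\pm\infty$.
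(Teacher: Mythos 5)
Your proposal is correct, and its overall architecture matches the paper's: direct method for existence, uniqueness by strict convexity of $\eta$, sufficiency of Eq.~\eqref{eq:optim-condition-update} because a sum of subgradients always lies in the subdifferential of the sum, and then the characterization plus the $L^\infty$ bound on $\eta'(h_{k+1})$ obtained by explicitly constructing the candidate solution and checking that it satisfies the sufficient condition. Where you genuinely diverge is in two technical components, both of which you make more self-contained than the paper. First, for existence the paper invokes Rockafellar's results on integral functionals (\cite[Cor.~2A and 2B]{rockafellar1971integrals}) to get weak lower semicontinuity and weak compactness of sublevel sets in $L^1(\tau)$; you instead derive these from scratch, observing that surjectivity of $\eta'$ forces superlinearity of $\eta$, hence $L^1$-coercivity of the objective and, via de la Vall\'ee Poussin and Dunford--Pettis, relative weak compactness of minimizing sequences (with weak l.s.c.\ from convexity plus strong l.s.c.). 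This is exactly the content of the cited results, proved by hand. Second, for the necessity/regularity step the paper proceeds case by case using the closed-form updates of Table~\ref{table:update} (it spells out only $\bar H=\iota_{\Mm_+(\Theta)}$ under $\text{(A2)}_\pm$, verifying $\phi\in L^\infty(\tau)$, $\phi\le 0$, $\int\phi\, h_{k+1}\d\tau=0$); you organize all cases uniformly via pointwise minimization of the separable integrand plus one scalar Lagrange multiplier per global constraint, and you supply the intermediate-value/monotonicity argument for the existence of that multiplier, which the paper leaves implicit in the caption of Table~\ref{table:update} (``$\kappa$ is the unique number such that the update satisfies the constraint''). What your route buys is a proof that does not depend on pre-tabulated formulas and makes the no-duality-gap point explicit; what the paper's route buys is brevity and directly implementable updates. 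Two cosmetic slips, not gaps: under $\text{(A2)}_\pm$ your formula $h^c_{k+1}=\sigma(-s(w+c))$ for the globally constrained cases omits the positive-part (resp.\ soft-thresholding) coming from the separable piece of $\bar H$, though your phrase ``separates as above'' covers it; and the map $c\mapsto\int h^c\d\tau$ is only strictly monotone on the range where the iterate is not identically zero, which suffices since that is where the mass constraint is met.
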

\begin{proof}
Let $J_k$ be the function to minimize. Thanks to our assumptions that $\eta'(\mathrm{int} (\dom \eta)) = \RR$ and since $H$ is lower-bounded, by~\cite[Cor.~2B]{rockafellar1971integrals}, the sublevels of $J_k$ are compact with respect to the weak topology (induced on $L^1(\tau)$ by $L^\infty(\tau)$). Moreover, $J_k$ is convex and l.s.c.~for the same topology; in particular because $G'[g_k],\eta'(h_k)\in L^\infty(\tau)$ and for the term $\int \eta(f)\d\tau$, this follows from~\cite[Cor.~2A]{rockafellar1971integrals}. Thus by the direct method of the calculus of variations, there exists a minimizer $f_{k+1}\in L^1(\tau)$. Since $\eta$ is strictly convex, so is $J_k$ and this minimizer is unique. The condition of Eq.~\eqref{eq:optim-condition-update} is always a sufficient optimality condition since, by the subdifferential inclusion rule, it implies that $0\in \partial J_k(f)$. It thus remain to show that it is also necessary, in which case the property $\eta'(h_{k+1})\in L^\infty(\tau)$ immediately follows. 

This is done on a case by case basis for the functions $\bar H$ admissible under Assumption~(A1). Consider for instance the nonnegativity constraint $\bar H = \iota_{\Mm_+(\Theta)}$ and $\eta$ that satisfies Assumption~$\text{(A2)}_\pm$. Then with the update $h_{k+1}$ given in Table~\ref{table:update} (take $\lambda=0$), it holds
\begin{align*}
\phi &\coloneqq \frac{1}{s}\eta'(h_{k})-G'[g_k] -\frac1s \eta'(h_{k+1}) = \min\{ 0 , \frac{1}{s}\eta'(h_{k})-G'[g_k]\}.
\end{align*}
Clearly $\phi \in L^\infty(\tau)$, $\phi \leq 0$ and $\int \phi h_{k+1}\d\tau =0$ and thus $\phi \in \partial H(h_{k+1})$, which shows that $h_{k+1}$ is a minimizer and satisfies Eq.~\eqref{eq:optim-condition-update}. The other cases for $\bar H$ and $\eta$ that are admissible under (A1) and (A2) (such as those listed in Table~\ref{table:update}) can be treated similarly and follow computations which are standard in the finite dimensional setting.
\end{proof}

\begin{table}
\centering
\begin{tabular}{|l|c|c|}
\hline
\diagbox[width=2.9cm]{ $\bar H$}{$\eta$} & Assumption $\text{(A2)}_\pm$ ($\dom \eta=\RR$) & Assumption $\text{(A2)}_+$ ($\dom \eta =\RR_+$) \\
\hline
(i) $\iota_{\Mm_+(\Theta)} + \lambda \Vert \cdot\Vert$ &$ [\eta']^{-1}\big( \eta'(h_k) - sG'[g_k] -s\lambda \big)_+$ &$ [\eta']^{-1}\big( \eta'(h_k) - sG'[g_k] -s\lambda \big) \vphantom{\Big(}$\\
(ii) $\iota_{\Pp(\Theta)}$ &$ [\eta']^{-1}\big( \eta'(h_k) - sG'[g_k] -\kappa \big)_+$ &$ [\eta']^{-1}\big( \eta'(h_k) - sG'[g_k] -\kappa \big)\vphantom{\Big(}$\\
(iii) $\lambda \Vert \cdot\Vert$ &$ [\eta']^{-1}\Big(\sfth_{\lambda s}\big( \eta'(h_k) - sG'[g_k] \big)\Big)$ & $ [\eta']^{-1}\big( \eta'(h_k) - sG'[g_k] -s\lambda \big) \vphantom{\Big(}$ \\
(iv) $\iota_{\{\mu\;; \Vert \mu\Vert\leq K \}}$ &$ [\eta']^{-1}\Big(\sfth_{\kappa}\big( \eta'(h_k) - sG'[g_k]  \big)\Big)$ & $ [\eta']^{-1}\big( \eta'(h_k) - sG'[g_k] -\kappa \big)\vphantom{\Big(}$ \\
\hline
\end{tabular}
\caption{Update step $h_{k+1}$ for APGM, and PGM (when $f_k=g_k=h_k$) where $\sfth_\kappa(a) = \sign(a)(\vert a\vert - \kappa a)$ is a soft-thresholding. In (ii), $\kappa\in \RR$ is the \emph{unique} number such that the update satisfies the constraint. In (iv), $\kappa\geq 0$ is the \emph{smallest} number such that the update satisfies the constraint (see~\cite{condat2016fast} for efficient algorithms to compute $\kappa$ in practice). }\label{table:update}
\end{table}

Let us now recall the guarantees for these methods. We stress that, as discussed in Section~\ref{sec:general}, these guarantees do not necessarily lead to convergence rates.
\begin{proposition}\label{prop:classical-rates} Assume~(A1) and (A2) and that $\eta$ satisfies the conclusion of Lemma~\ref{lem:strong_convexity} for some $p\in {[1,2]}$, $\beta \geq 0$. Consider an initialization $f_0\in \dom H$ such that $\eta'(f_0)\in L^\infty(\tau)$ and, for some $K\geq \Vert f_0\Vert_{L^1(\tau)}$, a step-size $s \leq {(K+\beta)^{p-2}} (\Vert \Phi\Vert_\infty^2 \Lip(\nabla R))^{-1}$. 

(i) Let $(f_k)_{k\in \NN}$ be generated by Algorithm~\ref{alg:PGM} (PGM). If $\sup_k \Vert f_k\Vert_{L^1(\tau)} \leq K$, then Eq.~\eqref{eq:general-guarantee} holds with $\alpha_k= 1/(sk)$, i.e.
$$
F(f_k) - F(f) \leq \frac{1 }{s k} D_{\bar \eta}(f, f_0), \qquad \forall f\in L^1(\tau), \forall k\geq 1.
$$

(ii) Let $(f_k,g_k,h_k)_{k\in \NN}$ be generated by Algorithm~\ref{alg:APGM} (APGM). If $\sup_k \Vert h_k\Vert_{L^1(\tau)}\leq K$, then Eq.~\eqref{eq:general-guarantee} holds for $f_k$ with $\alpha_k=4/((k+1)^2s)$, i.e.
$$
F(f_k) - F(f) \leq \frac{4}{s (k+1)^2} D_{\bar \eta}(f, f_0), \qquad \forall  f\in L^1(\tau), \forall k\geq 1.
$$
Moreover, it holds $0<\gamma_k\leq 1$ and $\gamma_k\leq 2/(k+2)$ $\forall k \geq 0$.
\end{proposition}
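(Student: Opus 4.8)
The plan is to reduce both claims to the standard convergence analyses of PGM and APGM for composite convex minimization, the only nonstandard ingredient being the replacement of strong convexity of the divergence by the conditional strong convexity provided by Lemma~\ref{lem:strong_convexity}. The crucial observation is that the smoothness inequality of Lemma~\ref{lem:smoothness} gives
$G(\nu)-G(\mu)-\int G'[\mu]\d[\nu-\mu]\leq \tfrac12\Lip(G')\Vert\nu-\mu\Vert^2$ with $\Lip(G')\leq \Vert\Phi\Vert_\infty^2\Lip(\nabla R)$, and this quadratic upper model is exactly what the classical descent-lemma arguments consume. The step-size condition $s\leq (K+\beta)^{p-2}(\Vert\Phi\Vert_\infty^2\Lip(\nabla R))^{-1}$ is engineered so that, whenever the relevant iterates have $L^1(\tau)$-norm bounded by $K$, the strong-convexity constant $(K+\beta)^{p-2}$ from Lemma~\ref{lem:strong_convexity} dominates $s\cdot\Lip(G')$; this makes the divergence term $\tfrac1s D_{\bar\eta}$ strong enough to absorb the quadratic error in the smoothness bound.

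First I would treat PGM (part (i)). Writing $f_{k+1}$ as the minimizer in Algorithm~\ref{alg:PGM}, the first-order optimality condition (Proposition~\ref{prop:update}, Eq.~\eqref{eq:optim-condition-update}) yields, for every $f\in L^1(\tau)$, the standard three-point inequality
$\int G'[f_k](f_{k+1}-f)\d\tau + H(f_{k+1})-H(f)\leq \tfrac1s\big(D_{\bar\eta}(f,f_k)-D_{\bar\eta}(f,f_{k+1})-D_{\bar\eta}(f_{k+1},f_k)\big)$,
which is the Bregman analogue obtained from the generalized Pythagorean identity for $D_{\bar\eta}$. Combining this with convexity of $G$ (to pass from $G'[f_k]$ evaluated at $f_{k+1}$ to $G(f)$) and with the smoothness upper bound applied to $G(f_{k+1})-G(f_k)$, the term $-\tfrac1s D_{\bar\eta}(f_{k+1},f_k)$ is used, via Lemma~\ref{lem:strong_convexity} with norm bound $K$, to cancel the $\tfrac12\Lip(G')\Vert f_{k+1}-f_k\Vert^2$ error precisely because $s\Lip(G')\leq (K+\beta)^{p-2}$. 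This produces the one-step descent inequality $F(f_{k+1})-F(f)\leq \tfrac1s\big(D_{\bar\eta}(f,f_k)-D_{\bar\eta}(f,f_{k+1})\big)$. Telescoping from $0$ to $k-1$, dropping the nonnegative final divergence term, and using monotonicity of $(F(f_j))_j$ to replace the average by the last iterate gives $F(f_k)-F(f)\leq \tfrac{1}{sk}D_{\bar\eta}(f,f_0)$.

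For APGM (part (ii)) I would follow the estimate-sequence argument of~\citet{tseng2010approximation}. The same three-point inequality is written for the minimizer $h_{k+1}$ against the auxiliary divergence $\tfrac{\gamma_k}{s}D_{\bar\eta}(\cdot,h_k)$, the smoothness bound is applied at $g_k$, and the convex-combination structure $f_{k+1}=(1-\gamma_k)f_k+\gamma_k h_{k+1}$ together with the convexity of $\Vert\cdot\Vert^2$ converts the quadratic error $\Vert f_{k+1}-g_k\Vert^2=\gamma_k^2\Vert h_{k+1}-h_k\Vert^2$ into something absorbed by $\tfrac{\gamma_k}{s}D_{\bar\eta}(h_{k+1},h_k)$; here the factor $\gamma_k^2$ against $\gamma_k$ is what permits the step-size bound to close, again invoking Lemma~\ref{lem:strong_convexity} with the bound $\sup_k\Vert h_k\Vert\leq K$. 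Defining the Lyapunov quantity $\tfrac{1}{\gamma_k^2}(F(f_{k})-F(f))+\tfrac1s D_{\bar\eta}(f,h_k)$ and checking that the recursion $\gamma_{k+1}=\tfrac12(\sqrt{\gamma_k^4+4\gamma_k^2}-\gamma_k^2)$ enforces $(1-\gamma_{k+1})/\gamma_{k+1}^2\leq 1/\gamma_k^2$ shows this quantity is nonincreasing, and the elementary induction $\gamma_k\leq 2/(k+2)$ (with $0<\gamma_k\leq 1$) follows from the same recursion. Unfolding the Lyapunov bound yields $F(f_k)-F(f)\leq \tfrac{4}{s(k+1)^2}D_{\bar\eta}(f,f_0)$.

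The main obstacle, and the only place the argument genuinely departs from the textbook Hilbertian or Euclidean proofs, is the absence of global strong convexity of $D_{\bar\eta}$: Lemma~\ref{lem:strong_convexity} only furnishes a strong-convexity constant that is valid on the $L^1(\tau)$-ball of radius $K$ and that \emph{degrades} as $K$ grows when $p<2$. Consequently the entire derivation is conditional on the a priori bounds $\sup_k\Vert f_k\Vert_{L^1(\tau)}\leq K$ (resp.\ $\sup_k\Vert h_k\Vert_{L^1(\tau)}\leq K$) that appear in the hypotheses, and one must be careful that every iterate whose squared increment is being absorbed indeed lies in that ball so that Lemma~\ref{lem:strong_convexity} applies with the \emph{same} constant $(K+\beta)^{p-2}$ used to calibrate $s$. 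Verifying this matching is the delicate bookkeeping step; the rest is the standard (A)PGM machinery transcribed to the $L^1(\tau)$/measure setting.
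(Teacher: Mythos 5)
Your proposal is correct and follows essentially the same route as the paper: the published proof simply invokes Proposition~\ref{prop:update} for well-posedness and observes that the argument of \citet[Thm.~1]{tseng2010approximation} goes through verbatim, with Lemma~\ref{lem:smoothness} supplying the quadratic upper model and Lemma~\ref{lem:strong_convexity} supplying the strong convexity of $D_{\bar\eta}/(K+\beta)^{p-2}$ on the $L^1(\tau)$-ball of radius $K$ wherever the classical proof needs it -- exactly the calibration between $s$, $\Lip(G')$ and $(K+\beta)^{p-2}$ that you identify as the crux. Your expanded three-point-inequality and Lyapunov derivations are just an unpacking of that cited analysis (one cosmetic remark: $f_{k+1}-g_k=\gamma_k(h_{k+1}-h_k)$ holds exactly by the algorithm's definitions, so no convexity of $\Vert\cdot\Vert^2$ is needed there).
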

\begin{proof}
By Proposition~\ref{prop:update}, the updates are well-defined. The proof of~\cite[Thm.~1]{tseng2010approximation} goes through, in particular thanks to Lemma~\ref{lem:smoothness} (smoothness) and since $D_\eta/(K+\beta)^{p-2}$ is $1$-strongly convex with respect to $\Vert \cdot \Vert_{L^1(\tau)}$ whenever this property is needed in the proof. A particularly simple exposition of the proof for APGM can be found in~\cite[Thm.~4.24]{d2021acceleration}. 
\end{proof}
\begin{remark}\label{rem:bound}
A difficulty in Proposition~\ref{prop:classical-rates} is that when $p<2$, one needs to assume \emph{a priori} bounds on the $L^1$-norm of certain iterates to obtain convergence guarantees, because the metric induced by the divergence $D_{\bar\eta}$ becomes weaker as the $L^1$-norm increases.
Since Algorithm~\ref{alg:PGM} (PGM) is a descent method, $\Vert f_k\Vert_{L^1}$ is bounded, uniformly in $k$, as soon as the objective is coercive for the $L^1$-norm. But for Algorithm~\ref{alg:APGM} (APGM), even if variants exist where $(F(f_k))_k$ is monotonous~\citep{d2021acceleration}, this property does not seem to be sufficient to control $\Vert h_k\Vert_{L^1}$, even for coercive objectives. Of course, uniform bounds are always trivially satisfied when $\bar H$ includes the constraint $\iota_{\Vert \mu \Vert\leq K}$ or $\iota_{\Pp(\Theta)}$.
\end{remark}

\subsection{Reparameterized gradient descent as a Bregman descent}\label{eq:reparameterized}
In this paragraph, we recall a link between Bregman gradient descent, a.k.a.~mirror descent (an instance of Algorithm~\ref{alg:PGM}) and $L^2$ gradient descent dynamics on certain \emph{reparameterized} objectives. 
The purpose is to show that the convergence rates proved in Section~\ref{sec:upper-bounds} with $\eta_{\mathrm{ent}}$ and $\eta_{\mathrm{hyp}}$ are also relevant to understand $L^2$ gradient descent in certain contexts.
While these remarks are well-known~\citep{amid2020winnowing,vaskevicius2019implicit,azulay2021implicit}, we find it instructive to state them clearly in our context. In order to reduce the discussion to its simplest setting, we consider the continuous time dynamics in the unregularized setting, and we assume that they are well-defined.

The optimality conditions of the update of Algorithm~\ref{alg:PGM} (see Proposition~\ref{prop:update}) can be written as $\eta'(f_{k+1})-\eta'(f_k) = - s F'[f_k]$. As the step-size $s$ vanishes, this leads to a continuous trajectory $(f_t)_{t\geq 0}$, which we refer to as the  $\eta$-\emph{mirror flow}, that solves
\begin{align}\label{eq:continuousMD}
\frac{d}{dt} \eta'(f_t)=-F'[f_t] &&\Leftrightarrow&& \frac{d}{dt} f_t = -[\eta''(f_t)]^{-1}F'[f_t].
\end{align}

\begin{proposition}[Reparameterized mirror flows as gradient flows] \label{prop:reparametrized}
(i) \emph{(Square parameterization).} Let $(f_t)_{t\geq 0}$ be the $L^2(\tau)$ gradient flow of $\hat F:f \mapsto F(f^2)$ initialized such that $\log(f_0)\in L^\infty(\tau)$. Then $h_t \coloneqq f_t^2$ is the $\eta_{\mathrm{ent}}$-mirror flow of $4F$.

(ii) \emph{(Difference of squares parameterization).} Let $(f_t,g_t)_{t\geq 0}$ be the $(L^2(\tau))^2$ gradient flow of $\hat F:(f,g) \mapsto F(f^2-g^2)$ initialized such that $\log(f_0g_0)\in L^\infty(\tau)$. Then $h_t \coloneqq f_t^2-g_t^2$ is the $\eta_{\mathrm{hyp}}$-mirror flow of $4F$ with parameter $\beta=2f_0g_0$ (here $\beta$ is function instead of a scalar). 
\end{proposition}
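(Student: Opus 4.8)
The plan is to compute, in both cases, the $L^2(\tau)$ gradient of the reparameterized objective $\hat F$ via the chain rule, write down the corresponding gradient flow ODE, and then differentiate the candidate $h_t$ in time to recognize the mirror-flow equation~\eqref{eq:continuousMD} for the claimed distance-generating function. Throughout, $F'[\cdot]$ denotes the differential supplied by Lemma~\ref{lem:smoothness} (here $H=0$), and the only analytic input is that $f\mapsto F(f^2)$ is differentiable with the expected chain rule, which follows from the smoothness of $F$ since $F'[\mu]\in \Cc^0(\Theta,\RR)\subset L^\infty(\tau)\subset L^2(\tau)$ and $f^2\in L^1(\tau)$ whenever $f\in L^2(\tau)$.

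For part (i), I would first show that the $L^2(\tau)$ gradient of $\hat F(f)=F(f^2)$ is $\nabla \hat F(f) = 2 f\, F'[f^2]$. Expanding $(f+\epsilon v)^2 = f^2 + 2\epsilon f v + \epsilon^2 v^2$ and using $F(\mu+\nu)-F(\mu) = \int F'[\mu]\d\nu + o(\Vert\nu\Vert)$ gives $\hat F(f+\epsilon v)-\hat F(f) = 2\epsilon\int F'[f^2]\, f v\d\tau + o(\epsilon)$, which identifies the gradient. Hence the flow reads $\frac{d}{dt} f_t = -2 f_t\, F'[f_t^2]$. Setting $h_t = f_t^2$ and differentiating, $\frac{d}{dt} h_t = 2 f_t \frac{d}{dt}f_t = -4 f_t^2\, F'[h_t] = -4 h_t\, F'[h_t]$. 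Since $\eta_{\mathrm{ent}}''(s)=1/s$, the $\eta_{\mathrm{ent}}$-mirror flow of $4F$, namely $\frac{d}{dt} h_t = -[\eta_{\mathrm{ent}}''(h_t)]^{-1}(4F)'[h_t] = -4 h_t\, F'[h_t]$, coincides with this (equivalently $\frac{d}{dt}\log h_t = -4 F'[h_t]$). I would also observe that the pointwise ODE $\frac{d}{dt} f_t = -2 f_t\, F'[f_t^2]$ is linear in $f_t$, so $f_t$ stays positive and bounded given $\log f_0\in L^\infty(\tau)$, keeping us inside $\dom\eta_{\mathrm{ent}}$.

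For part (ii), the same computation yields $\nabla_f\hat F = 2f\, F'[f^2-g^2]$ and $\nabla_g\hat F = -2g\, F'[f^2-g^2]$, so the flow is $\frac{d}{dt}f_t = -2 f_t\, F'[h_t]$ and $\frac{d}{dt}g_t = +2 g_t\, F'[h_t]$ with $h_t=f_t^2-g_t^2$. The crucial observation is the conservation law $\frac{d}{dt}(f_tg_t)= g_t(-2f_t F'[h_t]) + f_t(2 g_t F'[h_t]) = 0$, so $f_tg_t\equiv f_0g_0$. Differentiating $h_t$ gives $\frac{d}{dt}h_t = -4(f_t^2+g_t^2)\, F'[h_t]$, and the algebraic identity $(f_t^2-g_t^2)^2 + (2f_tg_t)^2 = (f_t^2+g_t^2)^2$ combined with the conservation law yields $f_t^2+g_t^2 = \sqrt{h_t^2 + \beta^2}$ with $\beta = 2f_0g_0$. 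Since $\eta_{\mathrm{hyp}}''(s)=(s^2+\beta^2)^{-1/2}$, this is exactly the $\eta_{\mathrm{hyp}}$-mirror flow of $4F$: $\frac{d}{dt}h_t = -\sqrt{h_t^2+\beta^2}\cdot 4 F'[h_t]$, equivalently $\frac{d}{dt}\arcsinh(h_t/\beta) = -4F'[h_t]$, all relations being understood pointwise with $\beta$ a function of $\theta$.

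The computations are elementary; the points to watch are (a) justifying the chain-rule differentiation of $\hat F$ in the $L^2(\tau)$ setting, which is handled by the mapping properties of $F'$ noted above, and (b) checking that the flows remain in the interior of the relevant domain so that $\eta'(h_t)$ is defined — the hypotheses $\log f_0\in L^\infty(\tau)$ and $\log(f_0g_0)\in L^\infty(\tau)$ are precisely what guarantee this, since the pointwise ODEs for $f_t,g_t$ are linear and preserve positivity. The only non-mechanical step is spotting the conserved quantity $f_tg_t$ in part (ii), which is what converts the $\sqrt{\cdot}$ metric of the hyperbolic entropy into the sum of squares produced by the reparameterized flow.
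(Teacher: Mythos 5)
Your proof is correct and follows essentially the same route as the paper: compute the reparameterized gradients via the chain rule, differentiate $h_t$, and in part (ii) use the conserved quantity $f_tg_t$ (the paper equivalently conserves $\tilde h_t^2-h_t^2=4f_t^2g_t^2$ with $\tilde h_t=f_t^2+g_t^2$) to identify $f_t^2+g_t^2=\sqrt{h_t^2+\beta^2}$ with $\beta=2f_0g_0$. Your added remarks on justifying the chain rule in $L^2(\tau)$ and on positivity preservation are sound extras; the paper sidesteps them by explicitly assuming the continuous-time dynamics are well-defined.
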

We can make the following remarks:
\begin{itemize}
\item Combining (i) and (ii), we find that if $(h^+_t, h^-_t)_{t\geq 0}$ follows a $\eta_{\mathrm{ent}}$-mirror flow for $(h^+,h^-)\mapsto F(h^+-h^-)$ then $h^+_t-h^-_t$ is a $\eta_{\mathrm{hyp}}$-mirror flow for $F$. This confirms the interpretation of $\eta_{\mathrm{hyp}}$ as a ``signed'' version of the entropy (see also~\cite[Thm.~23]{ghai2020exponentiated}).
\item These exact equivalences are lost in discrete time, with an error term that scales as the squared step-size. It is thus difficult to convert the most efficient guarantees for (Bregman) PGM into guarantees with the same convergence rate for gradient descent.
\end{itemize}

\begin{proof}
(i) The $L^2(\tau)$ gradient flow of $\hat F$ satisfies 
$
\frac{d}{dt} f_t = - \hat F'[f^2_t] =- 2 f F'[f^2_t].
$
Thus the function $h_t \coloneqq f_t^2$ evolves according to 
\begin{align*}
\frac{d}{dt} h_t = - 2f_t \frac{d}{dt} f_t = -4f_t^2 F'[f_t^2]= - 4 h_t F'[h_t]
\end{align*}
 which is precisely the $\eta_{\mathrm{ent}}$-mirror flow of $4F$ since $\eta_{\mathrm{ent}}''(s)=s^{-1}$ for $s>0$.

(ii) The $(L^2(\tau))^2$ gradient flow of $\hat F$ satisfies $\frac{d}{dt}  f_t = - 2 f_t  F'[f^2_t-g^2_t]$ and  $\frac{d}{dt}  g_t =  2 g_t F'[f^2_t-g^2_t]$. As a consequence, we have for $h_t \coloneqq f_t^2-g_t^2$ and $\tilde h_t \coloneqq f_t^2+g_t^2$ that 
$
\frac{d}{dt}  h_t = - 4 \tilde h_t F'[h_t].
$
To conclude, it remains to show that $\tilde h_t = [\eta_{\mathrm{hyp}}''(h_t)]^{-1} = (h_t^2+\beta^2)^{1/2}$ for some $\beta>0$. To prove this, observe that
\begin{align*}
\frac{d}{dt} (\tilde  h_t^2 - h_t^2) = \frac{d}{dt} (4f_t^2g_t^2) = 8 f'_tf_tg_t^2 + 8 g'_tg_tf_t^2 = 0.
\end{align*}
Hence $\tilde h_t^2 - h_t^2 = \tilde h_0^2 - h_0^2 = 4f_0^2g_0^2$, which proves that $\tilde h_t=(h_t^2+\beta^2)^{1/2}$ with $\beta=2f_0g_0$.
\end{proof}

\section{Upper bounds on the convergence rates}\label{sec:upper-bounds}

This section contains the main result of this paper which is Theorem~\ref{th:upper-bounds} and summarized in Table~\ref{table:rates}. As discussed in Section~\ref{sec:general} and thanks to Proposition~\ref{prop:classical-rates}, in order to derive convergence rates for Algorithms~\ref{alg:PGM} and~\ref{alg:APGM} it is sufficient to control the function
 \begin{align}\label{eq:psi}
  \psi(\alpha) \coloneqq \inf_{f\in L^1(\tau)} \Big\{ F(f)-\inf F + \alpha D_\eta(f,f_0)\Big\}.
 \end{align}
For the class of problems we consider, the behavior of $\psi$ highly depends on the context.  The simplest situation is when $F$ admits a minimizer $f^* \in L^q(\tau)$ with $q>1$ (since $\tau$ is finite, it holds $L^q(\tau)\subset L^1(\tau)$). Then for the distance-generating functions $\eta_p$ for $1<p\leq q$ or $\eta_{\mathrm{hyp}}$, it is easy to see that $D_{\bar \eta}(f^*,f_0)<+\infty$ (this further requires $f^*\geq 0$ for $\eta_{\mathrm{ent}}$). Thus by Proposition~\ref{prop:general-psi}, it holds $\psi(\alpha)\lesssim \alpha$ and the convergence rates $(\alpha_k)$ given in Proposition~\ref{prop:classical-rates} are preserved. 

In the more subtle case where the minimizer of $\bar F$ is only assumed to be in $\Mm(\Theta)$, the variety of behaviors is captured by the following result.

\begin{theorem}\label{th:upper-bounds} Under Assumptions~(A1) and (A2), let $f_0$ be such that $\eta'(f_0)\in L^\infty(\tau)$. Assume that there exists $\mu^* \in \Mm(\Theta)$ such that $\bar F(\mu^*)=\inf F$. Under setting $\text{(A2)}_{+}$ (i.e.~$\dom \eta=\RR_+$) assume moreover $\mu^*\in \Mm_+(\Theta)$.
Then it holds
\begin{align}\label{eq:rate-optim-form}
\psi(\alpha) \lesssim \inf_{\epsilon>0} \;\epsilon^q + \alpha \epsilon^d\eta(\epsilon^{-d}).
\end{align}
{where $q\in \{1,2,4\}$ is determined by Table~\ref{table:rates}-(b) (the largest $q$, the strongest the bound). Namely, the bound holds:
\begin{itemize}
\item with $q=1$ if $\Phi$ is Lipschitz continuous;
\item with $q=2$ if $\Phi$ is Lipschitz continuous and $\bar G'[\mu^*]=0$, or if $\nabla_\theta \Phi$ is Lipschitz continuous;
\item with $q=4$ if $\nabla_\theta \Phi$ is Lipschitz continuous and $\bar G'[\mu^*]=0$.
\end{itemize}}
\end{theorem}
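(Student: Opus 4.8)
The plan is to use, as reference points in the infimum defining $\psi$, the mollifications of the optimal measure $\mu^*$. Concretely, I would fix a symmetric mollifier and, at scale $\eps>0$, set $\mu_\eps \coloneqq \rho_\eps * \mu^*$ (defined on $\Theta$ through the exponential map) and let $f_\eps\in L^1(\tau)$ be its density with respect to $\tau$. Since $\rho_\eps$ is a probability density supported in an $\eps$-ball, convolution preserves total mass and positivity and spreads the mass over regions of volume $\asymp\eps^d$, so that $0\le f_\eps\lesssim \eps^{-d}$ pointwise (with $f_\eps\ge 0$ under $\text{(A2)}_+$, using $\mu^*\in\Mm_+(\Theta)$) while $\Vert f_\eps\Vert_{L^1(\tau)}\le\Vert\mu^*\Vert$. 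Feeding $f_\eps$ into Eq.~\eqref{eq:psi} gives $\psi(\alpha)\le [F(f_\eps)-\inf F] + \alpha D_{\bar\eta}(f_\eps,f_0)$, and it then remains to bound the two terms by $\eps^q$ and $\eps^d\eta(\eps^{-d})$ respectively; taking the infimum over $\eps$ yields Eq.~\eqref{eq:rate-optim-form}.

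For the divergence term I would first discard the contributions of $f_0$: since $\eta'(f_0)\in L^\infty(\tau)$ forces $f_0$ to take values in a compact subset of $\mathrm{int}(\dom\eta)$, both $\int\eta(f_0)\dd\tau$ and $\int\eta'(f_0)(f_\eps-f_0)\dd\tau$ are $O(1)$ (the latter using $\Vert f_\eps\Vert_{L^1(\tau)}=O(1)$), so $D_{\bar\eta}(f_\eps,f_0)=\int\eta(f_\eps)\dd\tau+O(1)$. To bound $\int\eta(f_\eps)\dd\tau$ I would use convexity of $\eta$ on $[0,T]$ with $T\asymp\eps^{-d}$: the graph lies below the chord through $(0,\eta(0))$ and $(T,\eta(T))$, hence $\eta(|f_\eps|)\le\eta(0)+\frac{|f_\eps|}{T}\eta(T)$ pointwise and $\int\eta(f_\eps)\dd\tau\le\eta(0)+\frac{\eta(T)}{T}\Vert\mu^*\Vert$. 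With $T\asymp\eps^{-d}$ and the hypothesis $\eta(cx)\asymp\eta(x)$ this is $\lesssim\eps^d\eta(\eps^{-d})$, and the additive $O(1)$ is dominated because $\eps^d\eta(\eps^{-d})$ stays bounded away from $0$ as $\eps\to0$. This is exactly where the normalization $\eta(cx)\asymp\eta(x)$ pays off.

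The suboptimality term is where the exponent $q$ is decided. The regularizer is harmless: each admissible $\bar H$ does not increase under convolution by a probability kernel (for instance $\Vert\rho_\eps*\mu\Vert\le\Vert\mu\Vert$, and the constraints $\iota_{\Mm_+}$, $\iota_{\Pp}$, $\iota_{\{\Vert\cdot\Vert\le 1/\lambda\}}$ are preserved), so $H(f_\eps)=\bar H(\mu_\eps)\le\bar H(\mu^*)$ and it suffices to control $G(f_\eps)-\bar G(\mu^*)=R(\int\Phi_\eps\dd\mu^*)-R(\int\Phi\dd\mu^*)$, using the identity $\int\Phi\dd\mu_\eps=\int(\Phi*\rho_\eps)\dd\mu^*=\int\Phi_\eps\dd\mu^*$ with $\Phi_\eps\coloneqq\Phi*\rho_\eps$. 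Applying the descent inequality stemming from the Lipschitz continuity of $\nabla R$ (as in Lemma~\ref{lem:smoothness}) around $a\coloneqq\int\Phi\dd\mu^*$ splits the gap into a linear term $T_1=\int(g^* * \rho_\eps-g^*)\dd\mu^*$, with $g^*\coloneqq\bar G'[\mu^*]=\langle\nabla R(a),\Phi(\cdot)\rangle_\Ff$, and a quadratic term $T_2\lesssim\Vert\Phi_\eps-\Phi\Vert_\infty^2\,\Vert\mu^*\Vert^2$. I would then estimate the mollification errors: for a Lipschitz function the sup-norm error is $O(\eps)$, while for a $\mathcal{C}^{1,1}$ function the first-order term cancels by symmetry of $\rho_\eps$, giving $O(\eps^2)$. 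Hence $\Vert\Phi_\eps-\Phi\Vert_\infty$ is $O(\eps)$ if $\Phi$ is Lipschitz and $O(\eps^2)$ if $\nabla_\theta\Phi$ is Lipschitz, so $T_2\lesssim\eps^2$ or $\eps^4$; and $T_1$ vanishes when $g^*=0$, is $O(\eps)$ when $\Phi$ (hence $g^*$) is merely Lipschitz, and is $O(\eps^2)$ when $\nabla_\theta\Phi$ (hence $\nabla g^*$) is Lipschitz. Reading off the four combinations reproduces $q\in\{1,2,4\}$ exactly as stated.

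The hard part will be geometric rather than structural: making the mollification and its two estimates rigorous on a curved manifold. Defining $\rho_\eps*\mu^*$ through the exponential map, one must check simultaneously that (i) the density bound $f_\eps\lesssim\eps^{-d}$ and the mass preservation survive the Jacobian of $\exp$, and (ii) the symmetry cancellation yielding the $O(\eps^2)$ errors in the $q=2$ and $q=4$ cases is only perturbed by curvature at higher order, so the rates are unaffected. On the flat torus these points are routine; in general I would carry them out in geodesic normal coordinates with a partition of unity, checking that the curvature corrections enter only at order $\eps$ beyond the stated errors and are therefore negligible.
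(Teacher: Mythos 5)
Your proposal is correct and follows the same overall strategy as the paper -- the reference points in $\psi$ are mollifications $\mu_\epsilon$ of $\mu^*$ at scale $\epsilon$, with $F(f_\epsilon)-\inf F\lesssim \epsilon^q$ and $D_{\bar \eta}(f_\epsilon,f_0)\lesssim \epsilon^d\eta(\epsilon^{-d})$ -- but both key estimates are carried out by genuinely different arguments. For the suboptimality term, the paper linearizes at the \emph{smoothed} measure: by convexity, $G(\mu_\epsilon)-G(\mu^*)\leq \int G'[\mu_\epsilon]\,\d[\mu_\epsilon-\mu^*]$, and then tracks the regularity of $G'[\mu_\epsilon]$ case by case, obtaining the starred cases by bootstrapping ($\bar G'[\mu^*]=0$ together with Lipschitz $\nabla R$ gives $\Lip(G'[\mu_\epsilon])\lesssim \epsilon$, resp.\ $\Lip(\nabla_\theta G'[\mu_\epsilon])\lesssim\epsilon^2$). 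You instead linearize at $\mu^*$ via the Hilbert-space descent lemma for $R$, splitting the gap into a linear term $\int (g^* * \rho_\epsilon - g^*)\d\mu^*$ and a quadratic term $\lesssim \Vert \Phi_\epsilon-\Phi\Vert_\infty^2$; note that you correctly apply smoothness in $\Ff$ rather than the total-variation inequality of Lemma~\ref{lem:smoothness}, which would not decay since $\Vert\mu_\epsilon-\mu^*\Vert$ stays of order one. The two decompositions are dual and yield identical exponents in all four cells of Table~\ref{table:rates}-(b); yours makes the role of $\bar G'[\mu^*]=0$ maximally transparent (the linear term simply vanishes), while the paper's single convexity inequality treats all cases with one formula. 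For the divergence term, the paper combines Jensen's inequality with Fubini and handles signed $\mu^*$ via Jordan decomposition, whereas your pointwise bound $\vert f_\epsilon\vert \lesssim \epsilon^{-d}$ plus the chord (secant) bound of convexity covers the signed case directly through evenness of $\eta$; both routes invoke $\eta(cx)\asymp\eta(x)$ in the same way, and your observation that the $O(1)$ contributions of $f_0$ are absorbed because $\epsilon^d\eta(\epsilon^{-d})$ does not vanish as $\epsilon\to0$ is if anything more careful than the paper's. The one step you leave as a plan -- the second-order cancellation on a curved manifold -- is where the paper takes a shortcut you could borrow: it smooths with the box kernel (normalized indicators of geodesic balls) and invokes the Riemannian mean value theorem of \cite[Thm.~4.6]{gray1982mean} to get the $O(\epsilon^2)$ deviation for functions with Lipschitz gradient, avoiding the by-hand normal-coordinates computation; your sketch (Jacobian corrections $1+O(\epsilon^2)$, even at leading order, so the odd-term cancellation survives up to higher-order errors) is nonetheless sound.
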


Given the bound of Eq.~\eqref{eq:rate-optim-form}, it is straightforward to compute the rates given in Table~\ref{table:rates}. The exponent $q\in \{1,2,4\}$ can be interpreted a follows: it is such that $F(\mu_\epsilon)-F(\mu^*)\lesssim \epsilon^q$ where $\mu_\epsilon$ is the convolution of $\mu^*$ with a box kernel of radius $\epsilon>0$. An asymptotic analysis leads to lower bounds for this exponent under several assumptions (Table~\ref{table:rates}-(b)), but in practice, non asymptotic effects may play an important role (see experiments in Section~\ref{sec:numerics-nets}).

{
\begin{remark}[Additive vs. Multiplicative updates] A consequence of Theorem~\ref{th:upper-bounds} is that algorithms with ``additive updates'' -- obtained with $\eta_2$ as a distance-generating function (e.g.~ISTA, FISTA) -- suffer from the ``curse of dimensionality in the convergence rates, see Table~\ref{table:rates}-(a). In comparison, algorithms with ``multiplicative updates'' -- obtained with $\eta_{\mathrm{ent}}$ or $\eta_{\mathrm{hyp}}$ as a distance-generating function -- always converge at a faster rate which is independent of the dimension $d$.  Note that Theorem~\ref{th:upper-bounds} only proves upper bounds on the rates, but we will see that they are tight in 
Section~\ref{sec:lower-bounds}.
\end{remark}
}

\begin{table}
\centering
\begin{subtable}[h]{0.40\textwidth}\centering
\begin{tabular}{|c|c|c|}
\hline
  & PGM & APGM\\
\hline
$\vphantom{\Big(}\eta_{\mathrm{ent}}, \eta_{\mathrm{hyp}}$ & $\log(k) k^{-1}$ & $\log(k) k^{-2}$\\
$\vphantom{\Big(}\eta_p$, $p>1$ &  $k^{-\frac{q}{(p-1)d+q}}$ & $k^{-\frac{2q}{(p-1)d+q}}$  \\
\hline
\end{tabular}
\caption{Convergence rates}
\end{subtable}
\hfill
\begin{subtable}[h]{0.58\textwidth}\centering
\begin{tabular}{|c|r|r|}
\hline
& $\Phi$ Lipschitz & $\nabla_\theta \Phi$ Lipschitz \\
\hline
$\vphantom{\Big(}\bar G'[\mu^*]$ \text{arbitrary} & $q=1$\;\; {\color{gray}(I)} &  $q=2$ \;\; {\color{gray}(II)}\\
$\vphantom{\Big(} \bar G'[\mu^*]= 0$ &   $q=2$ {\color{gray}(I*)}  &  $q=4$ {\color{gray}(II*)}\\
\hline
\end{tabular}
\caption{Value of $q$}
\end{subtable}
\caption{(a) Upper bounds on the convergence rates of $F(f_k)-\inf F$ for Algorithm~\ref{alg:PGM} (PGM) and~\ref{alg:APGM} (APGM). (b) The value of $q$ that appears in the rate depends on the regularity of $\Phi$ and on whether $\bar G'$ vanishes at optimality or not. This defines $4$ settings referred to as (I), (I*), (II) and (II*). Upper bounds are derived in Thm.~\ref{th:upper-bounds}, lower bounds are proved in Section~\ref{sec:lower-bounds}.}\label{table:rates}
\end{table}

\begin{proof}
The upper-bound in Eq.~\eqref{eq:rate-optim-form} corresponds to an upper bound on $F(f_\epsilon)-\inf F + \alpha D_{\bar \eta}(f_\epsilon,f_0)$ for a specific family of candidates $f_\epsilon \in L^1(\tau)$. A special case of this argument for sparse $\mu^*$, $\eta_{\mathrm{ent}}$ and $q=1$ appeared in~\cite{chizat2021sparse} and extended to $\mu^*\in \Mm_+(\Theta)$ in~\cite{domingo2020mean}. In the following we write $F,G,H$ for $\bar F, \bar G, \bar H$ to lighten notations. 

\emph{Step 1. Smoothing with a box kernel.} For $\epsilon>0$ (smaller than the injectivity radius of the exponential map in $\Theta$) consider the transition kernel $(\gamma_{\theta,\epsilon})_{\theta \in \Theta}$ where $\gamma_{\theta,\epsilon}\in \Pp(\Theta)$ is proportional to the restriction of $\tau$ to the closed geodesic ball $B_{\epsilon}(\theta)$ of radius $\epsilon$ centered at $\theta$. We define $\gamma_{\epsilon}\in \Mm(\Theta^2)$ as $\gamma_\epsilon(\d \theta,\d \theta') \coloneqq \gamma_{\theta,\epsilon}(\d \theta ')\mu^*(\d \theta)$. By construction, the first marginal of $\gamma_\epsilon$ is $\mu^*$ and we call $\mu_\epsilon$ its second marginal, which is absolutely continuous with respect to $\tau$ with density $f_\epsilon=\frac{\d\mu_\epsilon}{\d\tau}$. Since $\frac{\d \gamma_{\theta,\epsilon}}{\d\tau}(\theta') = \tau(B_{\epsilon}(\theta))^{-1} \mathbf{1}_{B_{\epsilon}(\theta)}(\theta')$, it holds
\begin{align*}
\mu_\epsilon(\d \theta') =  \int_\Theta \gamma_{\theta,\epsilon}(\d \theta ')\mu^*(\d \theta), &&\text{and}&&  f_\epsilon(\theta') =\int_{\Theta} \frac{\mathbf{1}_{B_{\theta,\epsilon}}(\theta')}{\tau(B_{\epsilon}(\theta))}\d\mu^*(\theta).
\end{align*}
Note that $\tau (B_\epsilon(\theta)) \asymp \epsilon^d$, see e.g.~\cite[Thm.3.3]{gray1979riemannian}.

\emph{Step 2. Bounding $ F(\mu_\epsilon)- F(\mu^*)$.} For our admissible regularizers, it is easy to verify that $H(\mu_\epsilon)\leq  H(\mu^*)$. By convexity of $ G$, we have
\begin{align*}
 F(\mu_\epsilon) -  F(\mu^*) \leq  G(\mu_\epsilon) -   G(\mu^*)  \leq \int  G'[\mu_\epsilon]\d [\mu_\epsilon - \mu^*]= \int_\Theta \big(  G'[\mu_\epsilon](\theta) -  G'[\mu_\epsilon](\theta') \big)\d\gamma(\theta,\theta').
\end{align*}
It is clear that the magnitude and regularity of $ G'[\mu_\epsilon]$ plays a role in the magnitude of this quantity. To go further, let us consider the various cases in Table~\ref{table:rates}-(b) successively.

\textbf{(I).} If $\Phi$ is Lipschitz then for $\theta,\theta'\in \Theta$ it holds
\begin{align*}
\vert  G'[\mu_\epsilon](\theta) - G'[\mu_\epsilon](\theta')\vert \leq \Big\Vert \nabla R\Big(\int\Phi \d\mu_\epsilon \Big)\Big\Vert_\Ff \cdot \Lip(\Phi) \cdot \dist(\theta,\theta').
\end{align*}
Since $\nabla R$ is Lipschitz continuous, we deduce that there exists $K>0$ such that $\vert G'[\mu_\epsilon](\theta) - G'[\mu_\epsilon](\theta')\vert\leq K\dist(\theta,\theta')$. It follows
\begin{align*}
 F(\mu_\epsilon)- F(\mu^*) & \leq K \int_{\Theta\times \Theta} \dist(\theta,\theta')\d\gamma (\theta,\theta') \\
&= K\int_{\Theta}\frac{1}{\tau(B_{\epsilon}(\theta))}\int_{B_{\epsilon}(\theta)} \dist(\theta,\theta')\d\tau(\theta')\d\mu^\star(\theta)  \leq K\cdot \epsilon\cdot \Vert \mu^*\Vert.
\end{align*}

\textbf{(I*).} If $\Phi$ is Lipschitz and moreover $G'[\mu^*]=0$, it holds 
\begin{align*}
G'[\mu_\epsilon](\theta) = \Big \langle \nabla R\Big(\int \Phi \d\mu_\epsilon\Big)-\nabla R\Big(\int \Phi \d\mu^*\Big),\Phi(\theta)\Big\rangle_\Ff.
\end{align*}
Since $\nabla R$ is Lipschitz continuous, the first factor is bounded by $\Lip(\nabla R)\cdot \Vert \int \Phi \d[\mu_\epsilon-\mu^*]\Vert = \Lip(\nabla R)\cdot \sup_{\Vert \tilde \Phi\Vert\leq 1} \int \langle \tilde \Phi,\Phi(\theta)\rangle \d[\mu_\epsilon -\mu^*](\theta)$. Under assumption (I*), the functions $\{\theta\mapsto \langle \tilde \Phi,\Phi(\theta)\rangle\;;\; \Vert \tilde \Phi\Vert\leq 1\}$ are uniformly Lipschitz, so by the reasoning above, it follows that $\Vert \int \Phi \d[\mu_\epsilon-\mu^*]\Vert\lesssim \epsilon$, so $\Lip(G'[\mu_\epsilon]) \lesssim \epsilon$. It follows that the previous bound improves to $ F(\mu_\epsilon)- F(\mu^*)\lesssim \epsilon^2$.

\textbf{(II).} Here we have that $G'[\mu_\epsilon]\in \Cc^1(\Theta;\RR)$ and $\nabla_\theta G'[\mu_\epsilon]$ is Lipschitz. By the Mean Value Theorem on Riemannian manifolds (see e.g.~\cite[Thm.~4.6]{gray1982mean}), there exists a constant $K'\geq 0$ such that for all $\theta\in \Theta$
$$
\Big\vert \frac{1}{\tau(B_\epsilon(\theta))} \int_{B_{\epsilon}(\theta)} G'[\mu_\epsilon](\theta')\d\tau(\theta') - G'[\mu_\epsilon](\theta) \Big\vert \leq K' \epsilon^2.
$$
It follows
\begin{align*}
F(\mu_\epsilon)- F(\mu^*) & \leq \int \d\mu^*(\theta)\int \big( G'[\mu_\epsilon](\theta) - G'[\mu_\epsilon](\theta')\big) \d\gamma_{\theta,\epsilon}(\theta') \leq K' \Vert \mu^*\Vert\epsilon^2.
\end{align*}

\textbf{(II*).} If $\nabla \Phi$ is Lipschitz and moreover $G'[\mu^*]=0$ then an improvement as in (I*) applies. The functions $\{\theta\mapsto \langle \tilde \Phi,\Phi(\theta)\rangle\;;\; \Vert \tilde \Phi\Vert\leq 1\}$ are differentiable with a uniformly Lipschitz derivative so arguments as in the previous paragraph show that  $\Vert \int \Phi \d[\mu_\epsilon-\mu^*]\Vert\lesssim \epsilon^2$, so  $\Lip(\nabla_\theta G'[\mu_\epsilon])\lesssim \epsilon^2$. Thus, going through the argument for (II), with all the constants multiplied by $\epsilon^2$, we obtain that the bound improves to $F(\mu_\epsilon)- F(\mu^*)\lesssim \epsilon^4$.

\emph{Step 3. Bounding $D_{\bar \eta}(f_\epsilon,f_0)$.} It holds $D_{\bar \eta}(f_\epsilon,f_0) = \int \eta(f_\epsilon)\d\tau - \int \eta(f_0)\d\tau - \int \eta'(f_0)(f_\epsilon-f_0)\d\tau$. Since $\eta'(f_0)\in L^\infty$, all these terms are bounded by a constant independent of $\epsilon$ except $\int \eta(f_\epsilon)\d\tau$, so it remains to bound the latter.  If $\mu^*=0$ then this quantity is bounded by a constant independent of $\epsilon$ and we are done. Otherwise let us first assume that $\mu^*\in \Mm_+(\Theta)$. By Jensen's inequality, one has $\forall \theta'\in \Theta$
\begin{align*}
\eta(f_\epsilon(\theta')) 
&=  \eta \Big( \int_\Theta \frac{\mu^*(\Theta)\mathbf{1}_{B_{\epsilon}(\theta)}(\theta')}{ \tau(B_{\epsilon}(\theta))} \frac{\d\mu^*(\theta)}{\mu^*(\Theta)}\Big) 
\leq  \int_\Theta \eta \Big( \frac{\mu^*(\Theta)\mathbf{1}_{B_{\epsilon}(\theta)}(\theta')}{ \tau(B_{\epsilon}(\theta))} \Big)  \frac{\d\mu^*(\theta)}{\mu^*(\Theta)}.
\end{align*}
It follows, by Fubini's theorem, 
\begin{align*}
\int \eta(f_\epsilon(\theta'))\d\tau(\theta') 
&\leq \int_\Theta \frac{\d\mu^*(\theta)}{\mu^*(\Theta)} \int_\Theta \d\tau(\theta')  \eta \Big( \frac{\mu^*(\Theta)\mathbf{1}_{B_{\epsilon}(\theta)}(\theta')}{ \tau(B_{\epsilon}(\theta))} \Big)  \\
&\leq \left( \sup_\theta \tau(B_\epsilon(\theta))   \Big\vert \eta\Big( \frac{\mu^*(\Theta)}{\tau(B_{\epsilon}(\theta))} \Big) \Big\vert \right) + \vert \eta(0)\vert  
\end{align*}
Now we use the fact that $\sup_\theta \tau(B_{\epsilon}(\theta)) \asymp \epsilon^d$ and {our assumption that $\eta(cx)\asymp \eta(x)$ for any fixed $c>0$ as $x\to\infty$ to bound this quantity by $O(\epsilon^d \eta(\epsilon^{-d}))$}. 

For the general case where $\mu^*\in \Mm(\Theta)$ let $\mu^*_+,\mu^*_- \in \Mm_+(\Theta)$ be the Jordan decomposition of $\mu^* = \mu_+^*-\mu_-^*$. It holds $\vert f_\epsilon(\theta)\vert  \leq \max\{f_{+,\epsilon}(\theta),f_{-,\epsilon}(\theta)\}$ where $f_{+,\epsilon}$ and $f_{-,\epsilon}$ are obtained by applying the smoothing procedure of Step.~1 to $\mu^*_+$ and $\mu^*_-$ respectively. Using the fact that under Assumption~$\text{(A2)}_\pm$, $\eta$ is even and increasing on $\RR_+$, we obtain the bound
$$
\int \eta(f_\epsilon)\d\tau \leq \int \eta(f_{+,\epsilon})\d\tau+ \int \eta(f_{-,\epsilon})\d\tau.
$$
We finally bound each of these terms as when $\mu^*\in \Mm_+(\Theta)$ to conclude.
\end{proof}

\section{Lower bounds}\label{sec:lower-bounds}
We will consider two types of lower bounds: (i) lower bounds on $\psi(\alpha)$ in order to confirm that the analysis in Thm.~\ref{th:upper-bounds} is tight, and (ii) direct lower bounds on the convergence rates of Algorithms~\ref{alg:PGM} and~\ref{alg:APGM}. Of course, the latter imply the  former, but studying $\psi$ directly has its own interest and makes it simpler to cover all the cases.

\subsection{Tight lower bounds on $\psi$}
Let us show that the bounds on $\psi$ in Theorem~\ref{th:upper-bounds} cannot be improved without additional assumptions.

\begin{proposition}[Lower-bounds]\label{prop:lower-bound}
For each of the $4$ settings of Table~\ref{table:rates}-(b) (determining the value of $q\in\{1,2,4\}$), there exists an objective function $\bar F$ satisfying Assumption~(A1) and $\inf \bar F = \bar F(\mu^*)$ with $\mu^*\in \Mm_+(\Theta)$, such that for any distance-generating function $\eta$ satisfying Assumption~(A2) and $f_0$ such that $\eta'(f_0)\in L^\infty(\tau)$, it holds
\begin{align}\label{eq:rate-lower}
\psi(\alpha) \gtrsim\inf_{\epsilon>0} \;\epsilon^q + \alpha \epsilon^d\eta(\epsilon^{-d}).
\end{align}
\end{proposition}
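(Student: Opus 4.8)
The plan is to construct, for each of the four settings, an explicit objective $\bar F$ whose suboptimality function $\psi$ realizes the claimed lower bound, and to do so with a construction that is essentially independent of the specific $\eta$ in use. The natural strategy is to mirror the upper-bound proof in reverse: the upper bound was obtained by smoothing a Dirac-like optimal measure with a box kernel, so for the lower bound I would take $\mu^*$ to be a single atom (or a small fixed number of atoms) and show that \emph{any} competitor $f$ in $L^1(\tau)$ that makes $F(f)-\inf F$ small must have a large $\int\eta(f)\d\tau$, forcing $D_{\bar\eta}(f,f_0)$ to be large. Concretely, I would first reduce Eq.~\eqref{eq:rate-lower} to two separate lower bounds matched by the two terms in the infimum: a bound showing that if $f$ is supported on a set of $\tau$-measure $\asymp \epsilon^d$ then $\int\eta(f)\d\tau \gtrsim \epsilon^d\eta(\epsilon^{-d})$, and a bound showing that concentrating mass on such a small set is \emph{necessary} to achieve suboptimality gap $\lesssim \epsilon^q$.

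For the first (divergence) half, I would use Jensen's inequality in the direction opposite to Step~3 of the upper bound: since $\mu^*$ has fixed total mass, any near-feasible $f$ must place mass of order one on a region of small $\tau$-measure, and convexity of $\eta$ gives $\int_A \eta(f)\d\tau \geq \tau(A)\,\eta\!\left(\tau(A)^{-1}\!\int_A f\d\tau\right)$; taking $\tau(A)\asymp\epsilon^d$ and using the growth condition $\eta(cx)\asymp\eta(x)$ recovers $\epsilon^d\eta(\epsilon^{-d})$ up to constants. For the second (approximation) half, the value of $q$ must be engineered into $\bar F$. I would choose $R$, $\Phi$, and the atomic location so that the data-fitting term $\bar G$ penalizes the difference between $\int\Phi\d\mu$ and its value at $\mu^*$ quadratically through $R$, while $\Phi$ is taken exactly as regular as the setting prescribes (Lipschitz for $q\in\{1,2\}$, with Lipschitz $\nabla_\theta\Phi$ for $q\in\{2,4\}$), and with $\bar G'[\mu^*]=0$ in the starred rows. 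The quadratic structure of $R$ near the optimum should convert a spatial spread of $\epsilon$ into an objective increase of order $\epsilon^{2r}$ where $r$ is the vanishing order of $\Phi(\theta)-\Phi(\theta^*)$ (roughly $\epsilon^q$ after accounting for the factor-of-two improvement from $\bar G'[\mu^*]=0$), so that forcing $F(f)-\inf F\lesssim\epsilon^q$ indeed forces concentration at scale $\epsilon$.

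Combining the two halves, for any $f$ I would let $\epsilon$ be the effective concentration scale of $f$ and argue that $F(f)-\inf F + \alpha D_{\bar\eta}(f,f_0)\gtrsim \epsilon^q + \alpha\,\epsilon^d\eta(\epsilon^{-d})$, which after taking the infimum over $f$ yields the infimum over $\epsilon$ on the right-hand side of Eq.~\eqref{eq:rate-lower}. A technical subtlety is that a general $f$ need not be a clean indicator on a ball, so ``concentration scale'' must be made precise—I expect the cleanest route is a layer-cake or level-set decomposition of $f$, bounding $\int\eta(f)\d\tau$ from below by the measure of superlevel sets and simultaneously bounding the objective gap from below in terms of how far $\int\Phi\d(f\tau)$ sits from $\int\Phi\d\mu^*$, with the latter controlled by how spread out the mass of $f$ is.

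The main obstacle, I expect, will be proving the \emph{approximation} lower bound in full generality rather than just for the box-smoothed family: one must show that \emph{no} measure—however cleverly shaped, not merely convolutions of $\mu^*$—can beat the $\epsilon^q$ versus $\epsilon^d\eta(\epsilon^{-d})$ tradeoff. This requires that the constructed $\bar G$ genuinely forces mass concentration to reduce the gap, i.e.\ that the only way to make $\bar G$ small is to approximate the atom, which in turn demands choosing $\Phi$ and $R$ so that $\bar G$ grows like a genuine (degenerate) power of the Wasserstein-type displacement of $f\tau$ from $\mu^*$. Engineering the exact exponent $q\in\{1,2,4\}$ through the interplay of the smoothness of $\Phi$ and the vanishing of $\bar G'[\mu^*]$, while keeping the lower bound valid uniformly over all admissible $\eta$, is where the real care is needed; the divergence half, by contrast, is a fairly direct Jensen/growth-condition computation.
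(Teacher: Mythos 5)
Your plan follows essentially the same route as the paper's proof: an atomic minimizer $\mu^*=\delta_{\theta_0}$ on $\TT^d$ with $\bar H=\iota_{\Pp(\Theta)}$, a per-row choice of $\bar G$ whose regularity and criticality match Table~\ref{table:rates}-(b), a Jensen-plus-growth-condition bound for the divergence term, and a concentration argument for the gap term. The technical subtlety you single out as the main obstacle --- making ``concentration scale'' precise for an arbitrary competitor and ruling out cleverly shaped $f$ --- is dispatched in the paper by a device simpler than your layer-cake proposal: define $\epsilon_f$ by $\int_{B_{\epsilon_f}(\theta_0)}f\,\d\tau=\tfrac12$. Jensen's inequality on the single ball $B_{\epsilon_f}(\theta_0)$ (using $\eta\geq 0$ to discard the complement, and $\eta(cx)\asymp\eta(x)$) gives $\int\eta(f)\d\tau\gtrsim\epsilon_f^d\,\eta(\epsilon_f^{-d})$, hence the same for $D_{\bar\eta}(f,f_0)$ since the remaining terms are $O(1)$ when $\eta'(f_0)\in L^\infty(\tau)$ and $\Vert f\Vert_{L^1(\tau)}=1$; and because the constructed $\bar G$ is linear in $\mu$ (or the square of a linear form), the half of the mass lying outside $B_{\epsilon_f}(\theta_0)$ immediately yields $F(f)-\inf F\gtrsim\epsilon_f^q$ for \emph{every} feasible $f$ --- e.g.\ in row (I), $F(f)=\int\dist(\theta,\theta_0)f(\theta)\d\tau(\theta)\geq\epsilon_f/2$ --- with no Wasserstein-type displacement analysis needed. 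Infimizing over $f$ then gives Eq.~\eqref{eq:rate-lower}.

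There is, however, one genuine flaw in your construction as described: you take $R$ to penalize $\int\Phi\d\mu-\int\Phi\d\mu^*$ quadratically in all four rows, toggling $\bar G'[\mu^*]=0$ only in the starred ones. But if $\nabla R$ vanishes at $\int\Phi\d\mu^*$, then $\bar G'[\mu^*]=\langle\nabla R(\int\Phi\d\mu^*),\Phi(\cdot)\rangle_\Ff=0$ holds automatically, so this construction lands in the starred setting in every row; the gap then scales as $\epsilon^{2r}$, and for, say, $\eta_2$ the upper bound of Theorem~\ref{th:upper-bounds} with the starred exponent is asymptotically \emph{smaller} than the non-starred lower bound you are trying to prove, so the claim with $q=1$ (row I) or $q=2$ (row II) would be false for your $\bar F$. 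The fix is what the paper does: take $R=\id$ for rows (I) and (II), i.e.\ $\bar G(\mu)=\int\Phi\d\mu$, and reserve $R(z)=z^2/2$ (with $\Phi\geq 0$ vanishing exactly at $\theta_0$) for (I*) and (II*). Finally, a technicality you should not omit in rows (II) and (II*): $\dist(\cdot,\theta_0)^2$ is not twice differentiable on all of $\TT^d$ because of the cut locus, so one must substitute a smooth $\tilde\Phi$ that agrees with it on $B_{1/2}(\theta_0)$ and stays above $1/4$ elsewhere.
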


\begin{proof}
Let us build explicit objective functions with $\Theta=\TT^d$ the $d$ dimensional torus, $\Ff=\RR$ and $\bar H=\iota_{\Pp(\Theta)}$. Let $\theta_0\in \TT^d$ and for $f\in L^1(\Theta)$ such that $f\tau \in \Pp(\Theta)$, let $\epsilon_f>0$ be such that $\int_{B_{\epsilon_f}(\theta_0)} f\d\tau = \frac12$. Such an $\epsilon_f$ exists because $\epsilon \mapsto \int_{B_\epsilon(\theta_0)} f\d\tau$ continuously interpolates between $0$ when $\epsilon=0$ and $1$ when $\epsilon$ is large. For any $\eta$ satisfying~Assumption (A2), using the fact that $\eta\geq 0$ and Jensen's inequality, it holds
\begin{align*}
\int \eta(f)\d\tau \geq  \int_{B_{\epsilon_f}(\theta_0)} \eta(f)\d\tau \geq \tau(B_{\epsilon_f}(\theta_0))   \eta\Big(\frac{1}{\tau(B_{\epsilon_f}(\theta_0))}\int_{B_{\epsilon_f}(\theta_0)} f\d\tau \Big) \gtrsim \epsilon_f^{d}\eta(\epsilon_f^{-d}).
\end{align*}
Thus for any $f_0$ such that $\eta'(f_0)\in L^\infty(\tau)$ it holds $D_{\bar \eta}(f,f_0)\gtrsim \epsilon_f^{d}\eta(\epsilon_f^{-d})$.

\textbf{(I).} Consider $\bar G(\mu) = \int_\Theta \dist(\theta_0,\theta)\d\mu(\theta)$. This satisfies Assumption~(A1)-(I) with $R$ the identity on $\RR$ and $\Phi(\theta)=\dist(\theta_0,\theta)$ which is clearly Lipschitz. Since $\bar H =\iota_{\Pp(\Theta)}$, $\bar F$ admits a unique minimizer $\mu^*=\delta_{\theta_0}$ and it holds $\bar F(\mu^*)=\inf \bar F=0$.
For any $f\in \dom H$, we have the following lower bound where $\epsilon_f$ is defined above:
\begin{align*}
F(f) -\inf F = \int_\Theta \dist(\theta,\theta_0)f(\theta)\d\tau(\theta) \geq \int_{\Theta \setminus B_{\epsilon_f}(\theta_0)} \dist(\theta,\theta_0)f(\theta)\d\tau(\theta) \geq \frac{\epsilon_f}2 .
\end{align*}
This proves the lower bound of Eq.~\eqref{eq:rate-lower} with $q=1$.

\textbf{(II).}  Consider  $\bar G(\mu) = \int_\Theta \tilde \Phi(\theta) \d\mu(\theta)$, where $\tilde \Phi$ is any function which is continuously twice differentiable, coincides with $\dist(\theta_0,\cdot)^2$ on $B_{1/2}(\theta_0)$ and is larger than $1/4$ outside of this ball. We cannot directly take $\dist(\theta_0,\cdot)^2$ because this function is not smooth everywhere on $\Theta$ due to the existence of a cut locus, but it is smooth on  $B_{1/2}(\theta_0)$. Assumptions (A1)-(II) are satisfied. Again $\mu^* =\delta_{\theta_0} $ is the unique minimizer and $F(\mu^*)=0$. Analogous computations show that $F(f)-\inf F \geq \min\{ \frac14, \epsilon_f^2\}$. This proves the lower bound of Eq.~\eqref{eq:rate-lower} with $q=2$.

\textbf{(I*).} Consider $\bar G(\mu) = \frac12 \big(\int \Phi \d\mu\big)^2$ where $\Phi(\theta)=\dist(\theta,\theta_0)$. Clearly, $\mu^* =\delta_{\theta_0} $ is the unique minimizer and $\bar G'[\mu^*]=0$ so Assumption (A1)-(I*) is satisfied. By direct computations, it holds $F(f)-\inf F \gtrsim \epsilon_f^2$. This proves the lower bound of Eq.~\eqref{eq:rate-lower} with $q=2$.

\textbf{(II*).}  Consider $\bar G(\mu) = \frac12 \big(\int \tilde \Phi \d\mu\big)^2$ where $\tilde \Phi$ is defined as in the analysis of (II). Again, $\mu^*$ is the unique minimizer and $\bar G'[\mu^*]=0$ so Assumption (A1)-(II*) is satisfied. By direct computations, it holds $F(f)-\inf F \gtrsim \epsilon_f^4$, which proves the lower bound with $q=4$.
\end{proof}

\begin{remark}[Exact decay of $\psi$ for a natural class of problems.] There exists in fact a broad class of problems satisfying Assumption (A1)-(II) for which the bound on $\psi$ with $q=2$ is exact. These are problems with a sparse solution $\mu^*$ that satisfy an additional non-degeneracy condition at optimality, that appear naturally in certain contexts~\citep{poon2018geometry}. For these problems, is it shown in~\cite[Prop.~3.2]{chizat2021sparse} that $\bar F(\mu) - \inf \bar F \gtrsim \big( \sup_{g} \int g \d[\mu - \mu^*]\big)^2$ where the supremum is over $1$-Lipschitz functions $g:\Theta\to \RR$ uniformly bounded by $1$. Reasoning as in the proof of Proposition~\ref{prop:lower-bound}, this implies $F(f) - \inf F \gtrsim \epsilon_f^2$ which leads to 
$$
\psi(\alpha) \asymp \inf_{\epsilon >0} \epsilon^2 + \alpha \epsilon^d \eta(\epsilon^{-d}).
$$ 
\end{remark}

\subsection{Direct lower bounds on the convergence rates}
In this section, we  directly lower-bound the convergence rates of Algorithm~\ref{alg:PGM} and Algorithm~\ref{alg:APGM}. We focus on the $L^2$ geometry ($\eta_{2}$) for which we prove all the lower bounds (there are $8$ cases to consider) and on the relative entropy geometry ($\eta_\mathrm{ent}$) for which we omit certain settings for the sake of conciseness. In all the cases considered, the lower bounds match the upper bounds (up to logarithmic terms for $\eta_\mathrm{ent}$). Let us start with PGM and $\eta_2$.

\begin{proposition}\label{prop:lower-eta2-PGM}
For each of the settings (I), (I*), (II) and (II*) under Assumption (A1), there exists a function $F$ such that the iterates of Algorithm~\ref{alg:PGM} (PGM) $(f_k)_{k\geq 0}$ with the distance-generating function $\eta_2$, and initialized with $f_0=1$, with any step-size $s>0$, satisfy
\begin{align*}
F(f_k) - \inf F\gtrsim k^{-\frac{q}{d+q}}
\end{align*}
where $q$ is the constant associated to the setting via Table~\ref{table:rates}-(b).
\end{proposition}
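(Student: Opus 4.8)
The plan is to construct, for each of the four settings, an explicit objective $F$ on a torus $\Theta=\TT^d$ for which the PGM iterates can be computed (or lower-bounded) in closed form, and then show that the suboptimality gap cannot decay faster than $k^{-q/(d+q)}$. The natural strategy is to mirror the constructions used in Proposition~\ref{prop:lower-bound}: the same $\bar G$ built from $\Phi(\theta)=\dist(\theta_0,\theta)$ (case I), $\tilde\Phi$ (case II), and their squared versions (cases I*, II*), with $\bar H=\iota_{\Pp(\Theta)}$ and $\mu^*=\delta_{\theta_0}$. The difference is that here I must track the actual dynamics of the algorithm rather than the static envelope $\psi$. Since $\eta_2$ gives additive updates with $\eta_2''\equiv 1$, the PGM step from Table~\ref{table:update}-(ii) reads $f_{k+1}=(f_k - sG'[f_k]-\kappa_k)_+$ where $\kappa_k$ enforces the simplex constraint $\int f_{k+1}\d\tau=1$, $f_k\geq 0$. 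The key structural feature I would exploit is rotational symmetry: if $f_0=1$ and $\Phi$ depends only on $\dist(\theta_0,\theta)$, then every iterate $f_k$ is a radial function of $r=\dist(\theta_0,\theta)$, so the infinite-dimensional dynamics reduces to a one-dimensional problem in $r$.

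First I would establish this radial reduction and extract a quantitative speed limit on how fast mass can concentrate near $\theta_0$. The crucial observation is that a single PGM step with step-size $s$ can only decrease the ``spread'' of $f_k$ by a bounded amount per iteration: because $G'[f_k](\theta)$ is Lipschitz in $\theta$ (its Lipschitz constant is controlled via Lemma~\ref{lem:smoothness} and the regularity of $\Phi$), the update $f_{k+1}=(f_k-sG'[f_k]-\kappa_k)_+$ shifts the density in a controlled way. Concretely I would track a concentration radius $\epsilon_k$ defined, as in Proposition~\ref{prop:lower-bound}, by $\int_{B_{\epsilon_k}(\theta_0)}f_k\d\tau=\tfrac12$, and argue that the support of $f_k$ cannot shrink below radius $\asymp \epsilon_k$ too quickly. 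The cleanest route is an energy/potential argument: I would lower-bound $F(f_k)-\inf F$ by $\epsilon_k^q$ (exactly the computation in Proposition~\ref{prop:lower-bound}), and separately upper-bound how much $\epsilon_k$ can have decreased after $k$ steps. For the latter I would use the total-variation budget: at each step the $L^1$-mass that can move inward is limited because $f_{k+1}-f_k$ is controlled by $s\|G'[f_k]\|_\infty$ in a pointwise sense, and because preserving $\int f\,\d\tau=1$ forces a compensating outflow. Summing these per-step increments gives that $\tau(\{f_k \geq \text{threshold}\})$, and hence the mass outside $B_{\epsilon}(\theta_0)$, decays no faster than linearly in $k$ for the relevant threshold, which translates into $\epsilon_k \gtrsim k^{-1/d}$-type bounds once combined with the volume growth $\tau(B_\epsilon)\asymp\epsilon^d$.

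The technical heart is balancing the two exponents correctly to land on $k^{-q/(d+q)}$ rather than a cruder bound. I expect the sharp argument to come from optimizing a trade-off identical in form to the envelope in Eq.~\eqref{eq:rate-optim-form}: after $k$ steps the iterate has ``paid'' a divergence-like cost $D_{\bar\eta_2}(f_k,f_0)\asymp \int f_k^2\,\d\tau \asymp \epsilon_k^{-d}$ (since concentrating mass $\tfrac12$ into a ball of radius $\epsilon_k$ forces density $\asymp \epsilon_k^{-d}$ there, contributing $\asymp \epsilon_k^d\cdot\epsilon_k^{-2d}=\epsilon_k^{-d}$ to the $L^2$ norm), and this cost must accumulate gradually. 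The matching lower bound should follow by showing the algorithm satisfies a reverse inequality to Proposition~\ref{prop:classical-rates}: that $\sum_{j\le k}$ of the per-step progress is bounded, forcing $F(f_k)-\inf F$ and $\epsilon_k^{-d}$ to grow/decay in tandem, and then optimizing $\epsilon \mapsto \epsilon^q$ against $\epsilon^{-d}/k$ gives $\epsilon_k\asymp k^{-1/(d+q)}$ and hence $F(f_k)-\inf F\gtrsim k^{-q/(d+q)}$.

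The main obstacle I anticipate is making the per-step concentration bound rigorous and dimension-correct: unlike the static variational problem for $\psi$, I must control the actual trajectory, and the soft-thresholding plus the implicitly-defined Lagrange multiplier $\kappa_k$ make the update nonlinear. The radial symmetry helps enormously, but I still need a clean monotone quantity (a Lyapunov function tracking concentration) whose per-step change is provably $O(s\cdot\text{const})$, and I must verify this constant does not secretly depend on $\epsilon_k$ in a way that breaks the scaling. A secondary subtlety is that the four cases (I), (I*), (II), (II*) differ only through the regularity of $\Phi$ and whether $\bar G'[\mu^*]=0$; I would handle them uniformly by noting that each yields $F(f)-\inf F\gtrsim \epsilon_f^{\,q}$ (already done in Proposition~\ref{prop:lower-bound}) and that the \emph{upper} bound on the per-step concentration speed is governed purely by $\Lip(G'[f_k])$, which is finite and $k$-uniform in all four cases, so the same trade-off optimization applies with the case-dependent exponent $q$ plugged in at the end.
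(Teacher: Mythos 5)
Your scaffolding---the same four test objectives as in Proposition~\ref{prop:lower-bound}, the radial symmetry, and the lower bound $F(f)-\inf F\gtrsim \epsilon_f^{\,q}$---coincides with the paper's, but your central mechanism has a genuine gap. The paper needs no speed-limit or Lyapunov argument at all: in each construction $G'[f]$ is a \emph{scalar multiple of the fixed profile} $\Phi$ (or $\tilde\Phi$), so the PGM iterates admit the closed form $f_k=(m(k)-s(k)\Phi)_+$ (with $s(k)=k$ in cases (I), (II)), and both the multiplier $m(k)$ and $F(f_k)$ follow from explicit one-dimensional integrals; the nonlinear cases (I*), (II*) are closed by a self-bounding trick, namely the increment $s(k+1)-s(k)=\int \Phi f_k\d\tau$ is controlled by the \emph{already proved upper bounds} (Theorem~\ref{th:upper-bounds} with Proposition~\ref{prop:classical-rates}), giving $s(k)\lesssim k^{(d+1)/(d+2)}$, resp.\ $k^{(d+2)/(d+4)}$, hence the matching lower bound on $F(f_k)$. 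Your ``reverse inequality to Proposition~\ref{prop:classical-rates}''---in effect $D_{\bar\eta_2}(f_0,f_k)\lesssim sk\,(F(f_k)-\inf F)$, which is exactly what your trade-off needs to conclude $F(f_k)\gtrsim \max\{\epsilon_k^{\,q},\,\epsilon_k^{-d}/(sk)\}\gtrsim k^{-q/(d+q)}$---is \emph{false} as a general statement about PGM: for any problem whose minimizer lies at finite divergence from $f_0$ and where the gap decays superlinearly, the right-hand side tends to $0$ while the left-hand side tends to $D_{\bar\eta_2}(f_0,f^*)>0$. The inequalities one can actually prove generically are much weaker: nonexpansiveness of the proximal step gives $\Vert f_{k+1}-f_k\Vert_{L^2}\leq s\Vert G'[f_k]\Vert_{L^2}$, and the three-point inequality telescoped at $f_0$ gives $D_{\bar\eta_2}(f_0,f_k)\leq sk\,(F(f_0)-\inf F)$; either way $\epsilon_k^{-d}\lesssim \Vert f_k\Vert_{L^2}^2\lesssim k$, hence only $\epsilon_k\gtrsim k^{-1/d}$ and $F(f_k)\gtrsim k^{-q/d}$, strictly weaker than the claimed $k^{-q/(d+q)}$. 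The looseness is intrinsic to budget arguments here: in case (I) the true displacement is $\Vert f_{k+1}-f_k\Vert_{L^2}\asymp k^{-(d+2)/(2(d+1))}$, far below the generic bound $s\Vert\Phi\Vert_{L^2}$, because the multiplier $\kappa_k$ cancels most of the gradient step---and bounding $\kappa_k$ sharply essentially amounts to solving the dynamics, i.e., to the paper's closed-form computation.

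A second, structural problem is your plan to ``handle the four cases uniformly'' via a $k$-uniform bound on $\Lip(G'[f_k])$. Any concentration speed limit depending only on that constant is case-independent, so the strongest such bound consistent with case (I) (where $\epsilon_k\asymp k^{-1/(d+1)}$ is actually achieved) is $\epsilon_k\gtrsim k^{-1/(d+1)}$, yielding at best $F(f_k)\gtrsim k^{-q/(d+1)}$---which falls short of $k^{-q/(d+q)}$ whenever $q>1$. The case-dependence of the radius $\epsilon_k\asymp k^{-1/(d+q)}$ comes precisely from the effects a uniform treatment discards: in (II) the quadratic flatness of $\tilde\Phi$ near $\theta_0$ (the support radius is $\sqrt{m(k)/k}$, not $m(k)/k$), and in (I*), (II*) the decay $\Vert G'[f_k]\Vert_\infty\asymp \int\Phi f_k\d\tau\to 0$ along the trajectory. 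The dependence of the ``constant'' on the iterate, which you flagged as a worry to be ruled out, is in fact what must be exploited; the correct repair of your sketch is the paper's route: use the radial structure to obtain the exact parametric form of $f_k$, and close the loop in the starred cases with the established upper bounds.
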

\begin{proof}
\textbf{(I).}  As in the proof of Proposition~\ref{prop:lower-bound}, we consider $\Theta=\TT^d$, $\theta_0 \in \TT$, $\bar H=\iota_{\Pp(\Theta)}$ and $\bar G(\mu) = \int \Phi \d\mu$ with $\Phi(\theta) = \dist(\theta,\theta_0)$. We set $s=1$ as the step-size plays no role in what follows. In this case, the update equation of Algorithm~\ref{alg:PGM} writes
$
f_{k+1} = \big( f_k - \Phi - \kappa_k \big)_+
$
where $ \kappa_k\in \RR$ is such that $f_{k+1}\in \dom H$. Thanks to the symmetries of the problem, a direct recursion shows that it holds $$f_k = (m(k) - k\Phi)_+$$ for some $m(k)\in \RR_+$. Let $r(k)\coloneqq m(k)/k$ which is such that $B_{r(k)}(\theta_0)$ is the support of $f_k$. We have
 \begin{align*}  
 1 = \int f_k\d\tau \asymp \int_0^{r(k)}u^{d-1} (m(k)-ku)\d u &\asymp \frac{m(k)}{d} r(k)^{d} - \frac{1}{d+1} k\cdot r(k)^{d+1}  \asymp m(k)^{d+1}k^{-d}
 \end{align*}
thus $m(k) \asymp k^{d/(d+1)}$ and $r(k)\asymp k^{-1/(d+1)}$. We can compute the objective
\begin{align*}
F(f_k) \asymp \int_0^{r(k)} u^{d-1} u (m(k)-ku)\d u &\asymp \frac{1}{d+1}m(k)r(k)^{d+1} - k\frac{1}{d+2}r(k)^{d+2} \asymp k^{-1/(d+1)}.
\end{align*}
Which proves the case (I) (here $q=1$) since $\inf F=0$. 

\textbf{(II).} Consider a smooth function $\tilde \Phi$ which equals $\dist(\theta,\theta_0)^2$ on the ball $B_{\nicefrac{1}{2}}(\theta_0)$ as in the proof of Proposition~\ref{prop:lower-bound}. Again the iterates have the form $f_k = (m(k)-k\tilde \Phi)_+$ and now let $r(k)^2=m(k)/k$. For $k$ large enough, so that $\tilde \Phi =\dist(\cdot,\theta_0)^2$ over $B_{r(k)}(\theta_0)$, it holds
 \begin{align*}
 1 = \int f_k\d\tau \asymp \int_0^{r(k)}u^{d-1} (m(k)-ku^2)\d u &\asymp \frac{m(k)}{d} r(k)^{d} - \frac{k}{d+2} \cdot r(k)^{d+2}  \asymp m(k)^{(d+2)/2}k^{-d/2}
 \end{align*}
thus $m(k) \asymp k^{d/(d+2)}$ and $r(k)\asymp k^{-1/(d+2)}$. It also holds
\begin{align*}
F(f_k) \asymp \int_0^{r(k)} u^{d-1} u^2 (m(k)-ku^2)\d u &\asymp \frac{m(k)}{d+2}r(k)^{d+2} - \frac{k}{d+4}r(k)^{d+4} \asymp k^{-2/(d+2)}
\end{align*}
Which proves the case (II) (here $q=2$).

\textbf{(I*).} We consider the function $G(\mu) = \frac12 (\int \Phi \d\mu)^2$. Now the reasoning is slightly more subtle because of the non-linearity. It holds $f_{k}=(m(k)-s(k)\Phi)_+$ where $s(k+1)=s(k) +\int \Phi f_k \d\tau$. Again, $m(k)$ is such that the iterate is feasible. Our upper bounds imply that $\int \Phi f_k \d\tau \lesssim k^{-1/(d+2)}$, thus 
$
s(k+1) - s(k) \lesssim k^{-1/(d+2)}
$
and it follows $s(k)\lesssim k^{\frac{d+1}{d+2}}$ and $F(f_k) \gtrsim (s(k)^{-1/(d+1)})^2\asymp k^{-2/(d+2)}$ as desired.

\textbf{(II*).} For the case (II*) we combine the ideas from (II) and (I*), let us just emphasize on the differences. We take the function $G(\mu) = \frac12 (\int \tilde \Phi )^2$. For $k$ large enough, it holds $f_k=(m(k)-s(k)\tilde \Phi)_+$ and, thanks to our upper-bound, $\int \tilde \Phi f_k \d\tau \lesssim k^{-2/(d+4)}$. The recursion becomes
$
s(k+1) - s(k) \lesssim  k^{-2/(d+4)}
$
and it follows $s(k)\lesssim k^{(d+2)/(d+4)}$ and $F(f_k) \gtrsim (s(k)^{-2/(d+2)})^2\asymp k^{-4/(d+4)}$ as desired.
\end{proof}

Let us now prove similar lower bounds for APGM, again for the specific choice of distance-generating function $\eta_2$.
\begin{proposition}
For each of the settings (I), (II), (I*) and (II*) under Assumption (A1), there exists a function $F$ such that the iterates of Algorithm~\ref{alg:APGM} (APGM) $(f_k)_{k\geq 0}$ with the distance-generating function $\eta_2$, and initialized with $f_0=1$, with any step-size $s>0$, satisfy
\begin{align*}
F(f_k) - \inf F\gtrsim k^{-\frac{2q}{d+q}}
\end{align*}
where $q$ is the constant associated to the setting via Table~\ref{table:rates}(b).
\end{proposition}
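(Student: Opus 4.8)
The plan is to mirror the proof of Proposition~\ref{prop:lower-eta2-PGM} (the PGM lower bounds) as closely as possible, now tracking the two coupled sequences $(f_k, h_k)$ of Algorithm~\ref{alg:APGM} instead of the single descent sequence. The same four test functions $\bar G$ from the PGM proof will be reused in settings (I), (II), (I*), (II*), with $\Theta = \TT^d$, $\bar H = \iota_{\Pp(\Theta)}$, and $\mu^* = \delta_{\theta_0}$. Thanks to the rotational symmetries of these problems and the choice $f_0 = h_0 = 1$, every iterate $f_k$ and $h_k$ remains a radial function of $\dist(\cdot,\theta_0)$, so I expect a closed-form parameterization $f_k = (m(k) - c(k)\Phi)_+$ (and similarly for $h_k$) to survive, where $c(k)$ now aggregates the accumulated gradient contributions through the momentum recursion $g_k = (1-\gamma_k)f_k + \gamma_k h_k$.

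First I would fix a setting, say (I) with $\Phi(\theta) = \dist(\theta,\theta_0)$, and write out the mirror-descent update for $h_{k+1}$ from Table~\ref{table:update}(ii) with $\eta_2$: it is a projected step $h_{k+1} = (\eta_2'(h_k) - s\bar G'[g_k] - \kappa_k)_+ = (h_k - s\bar G'[g_k] - \kappa_k)_+$. Since $\bar G'[g_k](\theta) = \nabla R(\int\Phi\dd g_k\tau)\cdot\Phi(\theta)$ is a nonnegative multiple of $\Phi$, and $f_0 = h_0 = 1$ is constant, an induction gives $h_k = (a(k) - b(k)\Phi)_+$ and $f_k = (m(k) - c(k)\Phi)_+$ for scalar sequences; the convex-combination steps preserve this radial linear-in-$\Phi$ form. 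The crucial quantity is the total slope $c(k)$ accumulated in $f_k$. Using $\gamma_k \leq 2/(k+2)$ from Proposition~\ref{prop:classical-rates} and the fact that each gradient step contributes a slope of order $s$ (in (I), where $\nabla R = \id$ so the gradient magnitude is bounded), one expects $c(k) \asymp k$ just as for PGM, because the momentum averaging does not change the \emph{order} of the accumulated slope—only constants. Then the same normalization computation, $1 = \int f_k\dd\tau \asymp m(k)^{d+1} c(k)^{-d}$, yields the support radius $r(k) = m(k)/c(k) \asymp k^{-1/(d+1)}$ and $F(f_k) \asymp r(k) \asymp k^{-1/(d+1)} = k^{-q/(d+q)}$ with $q=1$. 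But the target rate is $k^{-2q/(d+q)}$, so I must argue that the \emph{acceleration} genuinely speeds up the accumulated slope to $c(k) \asymp k^2$, which then gives $r(k) \asymp k^{-2/(d+1)}$ and the desired $k^{-2/(d+1)}$.

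The key structural input is therefore the quadratic growth of the effective gradient accumulation in APGM. I would extract this by unrolling the recursion $f_{k+1} = (1-\gamma_k)f_k + \gamma_k h_{k+1}$ together with the $h$-update, which shows $c(k)$ satisfies a recursion whose weights are governed by the $\gamma_k$ schedule; the standard APGM telescoping (as in~\cite[Thm.~4.24]{d2021acceleration}) makes $\sum \gamma_\ell^{-1}$-type sums appear, and since $\gamma_k \asymp 1/k$ these sums grow like $k^2$, delivering $c(k) \asymp k^2$. For settings (II), (I*), (II*) I would run the identical program with the respective test functions: in (II) use $\tilde\Phi$ equal to $\dist(\cdot,\theta_0)^2$ near $\theta_0$, giving $r(k)^2 = m(k)/c(k)$ and $F(f_k) \asymp r(k)^2 \asymp k^{-4/(d+2)}$; in the starred cases handle the nonlinearity $\bar G(\mu) = \frac12(\int\Phi\dd\mu)^2$ exactly as in Proposition~\ref{prop:lower-eta2-PGM}, feeding the APGM upper bound $\int\Phi f_k\dd\tau \lesssim k^{-2/(d+2)}$ into the recursion for the scalar prefactor $s(k)$ to get the slower growth that produces $q=2$ and $q=4$.

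\textbf{The main obstacle} I anticipate is verifying that the iterates truly retain the radial form $f_k = (m(k) - c(k)\Phi)_+$ despite the momentum step mixing $f_k$ and $h_k$ with different slopes, and—relatedly—pinning down that $c(k)$ grows like $k^2$ rather than $k$. The convex combination $g_k = (1-\gamma_k)f_k + \gamma_k h_k$ of two truncated-linear profiles with different support radii is not itself exactly of the form $(\text{const} - \text{const}\cdot\Phi)_+$ on the overlap, so I would need either to show the truncations align (both supports shrink comparably) or to argue that any discrepancy affects only lower-order constants and not the exponent. Making the quadratic-slope claim rigorous—rather than merely plausible from the $\gamma_k \asymp 1/k$ heuristic—is where the real work lies; the cleanest route is probably to lower-bound $c(k)$ directly from the telescoped APGM estimate and upper-bound it via the feasibility constraint, squeezing $c(k) \asymp k^2$ from both sides.
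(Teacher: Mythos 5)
Your quantitative mechanism is the correct one, and it matches the paper's: the slope of the $h$-iterates accumulates as $s(k)=\sum_{i\leq k}1/\gamma_i\asymp k^2$, and in the starred settings one feeds the APGM upper bound into the recursion for the scalar prefactor to get the slower growth that yields $q=2$ and $q=4$. Two local corrections, though. First, the update you wrote, $h_{k+1}=(h_k-s\bar G'[g_k]-\kappa_k)_+$, drops the $\gamma_k$-weighting of the divergence term in Algorithm~\ref{alg:APGM}: the $h$-step penalizes $\frac{\gamma_k}{s}D_{\bar\eta}(f,h_k)$, i.e.\ takes an \emph{amplified} step $s/\gamma_k$, which is exactly why you first landed on $c(k)\asymp k$. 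The quadratic growth is then read off directly from the update ($s(k)=\sum_{i\leq k}s/\gamma_i$ with $\gamma_i\asymp 1/i$); no telescoping estimate needs to be ``extracted.'' Second, in the starred settings the recursion is driven by $\int\Phi g_k\dd\tau$, not $\int\Phi f_k\dd\tau$; one needs the small extra observation that $\int\Phi g_k\dd\tau\leq(1-\gamma_k)\int\Phi f_k\dd\tau+\gamma_k\Vert\Phi\Vert_\infty\Vert h_k\Vert_{L^1}$ with $\gamma_k\lesssim 1/k$, so the bound on $\int\Phi f_k\dd\tau$ transfers.

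The genuine gap is the one you flag yourself and leave unresolved: the ansatz $f_k=(m(k)-c(k)\Phi)_+$ is simply false, since convex combinations of truncated-linear profiles with different support radii are piecewise linear with several kinks, and ``the discrepancy affects only lower-order constants'' is neither established nor the right repair. The paper's missing idea is to avoid tracking $f_k$ (and $g_k$) pointwise altogether. Only the $h$-sequence admits the exact closed form $h_{k+1}=(m(k)-s(k)\Phi)_+$, and it does so precisely because $\bar G'[g_k]$ is always a scalar multiple of $\Phi$ (in (I) and (II) it equals $\Phi$, resp.\ $\tilde\Phi$, identically, independent of $g_k$; in (I*)/(II*) the scalar is $\int\Phi g_k\dd\tau$), so the irregular shape of $g_k$ never enters the $h$-recursion. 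The value at $f_k$ is then controlled by convexity: $f_k\in\mathrm{conv}\{h_0,\dots,h_k\}$ and $\mu\mapsto\int\Phi\dd\mu$ is linear, so $F$ restricted to this hull is minimized at a vertex, namely the most concentrated iterate $h_k$, whence $F(f_k)\geq \min_{i\leq k}F(h_i)=F(h_k)\gtrsim k^{-2q/(d+q)}$. Without this transfer step (or an equivalent one), your plan cannot close: you would be computing $F(f_k)$ from a profile that $f_k$ does not have, and controlling the misaligned truncations uniformly in $k$ is exactly the work the hull argument renders unnecessary.
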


\begin{proof}
For \textbf{(I)}, we consider the same set up as in the proof of Prop.~\ref{prop:lower-eta2-PGM}-(I) (also fixing $s=1$ for conciseness). Since $G$ is linear, the update of $h_k$ reads $h_{k+1}=(m(k) - s(k)\Phi)_+$ with $s_k= \sum_{i=0}^{k}\frac{1}{\gamma_i}$ where $(\gamma_k)_k$ is defined in Algorithm~\ref{alg:APGM}. Since $\gamma_k \gtrsim 1/k$, we have $s(k)\asymp k^2$ which implies that $F(h_{k})\gtrsim k^{-2/(d+1)}$. Finally, it is clear that $f_k$ is in the convex hull of $\{h_0,\dots,h_k\}$ and since $G$ is linear, it holds $F(f_k)\geq \min_{i=1,\dots,k} F(h_k)\gtrsim k^{-2/(d+1)}$. The proof for the case \textbf{(II)} follows exactly the same scheme but with the function $\tilde \Phi$ considered in the proof of Proposition~\ref{prop:lower-eta2-PGM} and we omit the details.

In the case \textbf{(I*)}, we have $(m(k) -s(k)\Phi)_+$ with $s(k+1)-s(k)=\frac{1}{\gamma_k}\int \Phi g_k\d\tau$. Thanks to our upper-bounds, we have $\int \Phi f_k\d\tau\lesssim k^{-2/(d+2)}$, and since $\gamma_k\to 0$ and $\Vert h_k\Vert_{L^1}=1$, it follows  $\int \Phi g_k\d\tau\lesssim k^{-2/(d+2)}$. Thus $s(k+1)-s(k)\lesssim k^{d/(d+2)}$ which leads to $s(k)\lesssim k^{2(d+1)/(d+2)}$. It follows $F(h_k)\gtrsim s(k)^{-2/(d+1)} \gtrsim k^{-4/(d+2)}$. Since $h_k$ is optimal for $F$ over the convex hull of $\{h_0,\dots,h_k\}$, which contains $f_k$ (it has the most mass in small balls around $\theta_0$) we obtain the same lower bound on $F(f_k)$. The proof for the case \textbf{(II*)} follows the same scheme but with the function $\tilde \Phi$ and we omit the details.
\end{proof}

It is also instructive to look at lower bounds with the entropy $\eta_{\mathrm{ent}}$. We observe that here there exist cases where the guarantee given by Prop.~\ref{prop:classical-rates} is off by a $\log(k)$ factor (because this factor is present in the lower bound of Proposition~\ref{prop:lower-bound}).
\begin{proposition}
For the settings (I) and (II) under Assumption (A1), there exists a function $F$ such that the iterates of Algorithm~\ref{alg:PGM} (PGM) $(f_k)_{k\geq 0}$ with the distance-generating function $\eta_{\mathrm{ent}}$, and initialized with $f_0=1$, with any step-size $s>0$, satisfy
\begin{align*}
F(f_k) - \inf F\asymp k^{-1}.
\end{align*}
\end{proposition}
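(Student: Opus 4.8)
The statement asserts that for the entropy geometry $\eta_{\mathrm{ent}}$, PGM achieves the exact rate $k^{-1}$ (not merely $\lesssim \log(k)k^{-1}$) in settings (I) and (II). The upper bound $F(f_k)-\inf F \lesssim \log(k)k^{-1}$ already follows from Theorem~\ref{th:upper-bounds} combined with Proposition~\ref{prop:classical-rates}, so the real content here is a \emph{lower} bound $F(f_k)-\inf F \gtrsim k^{-1}$ together with a sharpened \emph{upper} bound that removes the logarithmic factor for these two specific objectives. The plan is to reuse the explicit one-dimensional-radial objectives from Proposition~\ref{prop:lower-eta2-PGM}: for (I) take $\Theta=\TT^d$, $\bar H=\iota_{\Pp(\Theta)}$, and $\bar G(\mu)=\int \dist(\theta,\theta_0)\d\mu$; for (II) take the smooth surrogate $\tilde\Phi$ that agrees with $\dist(\cdot,\theta_0)^2$ near $\theta_0$. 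With $f_0=1$, the symmetry of the problem forces every iterate $f_k$ to be radial about $\theta_0$.

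First I would write down the mirror-descent update for $\eta_{\mathrm{ent}}$ with $\bar H=\iota_{\Pp(\Theta)}$: from Table~\ref{table:update}-(ii) the update is multiplicative, $f_{k+1}(\theta) \propto f_k(\theta)\exp(-s\,\bar G'[f_k](\theta))$, renormalized to a probability density. Because $\bar G$ is linear in both cases, $\bar G'[f_k]=\Phi$ is \emph{independent of $k$}, so the recursion telescopes: $f_k(\theta) = Z_k^{-1}\exp(-sk\,\Phi(\theta))$ where $Z_k=\int \exp(-sk\,\Phi)\d\tau$ is the normalizing constant. This exact closed form is the key simplification that the entropy geometry buys us over $\eta_2$. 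Then $F(f_k)-\inf F = \int \Phi f_k\d\tau = -\frac{1}{sk}\frac{d}{d\beta}\log Z(\beta)\big|_{\beta=sk}$, reducing everything to a Laplace-type asymptotic analysis of the partition function $Z(\beta)=\int e^{-\beta\Phi}\d\tau$ as $\beta\to\infty$.

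Next I would carry out the Laplace asymptotics. In setting (I), $\Phi(\theta)=\dist(\theta,\theta_0)$ behaves like $u$ in geodesic polar coordinates with volume element $\asymp u^{d-1}\d u$, so $Z(\beta)\asymp \int_0^{\cdot} u^{d-1}e^{-\beta u}\d u \asymp \beta^{-d}$, giving $\log Z(\beta) = -d\log\beta + O(1)$ and hence $\int \Phi f_k\d\tau = -\frac{1}{sk}\frac{d}{d\beta}\log Z \asymp \frac{d}{\beta}\big|_{\beta=sk} \asymp k^{-1}$, with matching upper and lower constants. In setting (II), $\Phi\asymp u^2$ near $\theta_0$ gives $Z(\beta)\asymp \int_0^{\cdot} u^{d-1}e^{-\beta u^2}\d u \asymp \beta^{-d/2}$, whence $\log Z = -\tfrac{d}{2}\log\beta+O(1)$ and again $\int\Phi f_k\d\tau \asymp \beta^{-1}\asymp k^{-1}$. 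The same $k^{-1}$ decay appears in both settings because differentiating $\log Z \asymp -c\log\beta$ always yields $\asymp \beta^{-1}$, regardless of the exponent $q$ governing the local growth of $\Phi$. This is precisely why the logarithm disappears: the entropy flow concentrates exponentially fast, and the Gibbs measure $e^{-\beta\Phi}$ tracks the Laplace-principle rate exactly rather than the polynomially slower box-smoothing rate used in the general upper bound.

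The main obstacle is controlling the contribution of $\Phi$ \emph{away} from $\theta_0$ rigorously, i.e.\ justifying that the Laplace asymptotics are governed entirely by a neighborhood of the minimizer. For (I) this requires that $\dist(\cdot,\theta_0)$ has no competing minima and that the cut-locus contributions are exponentially suppressed; for (II) one must additionally ensure the smooth surrogate $\tilde\Phi$, which only agrees with $u^2$ on $B_{1/2}(\theta_0)$ and is bounded below by $1/4$ outside, contributes $O(e^{-\beta/4})$ to both $Z$ and $\int\Phi f_k\d\tau$, so that the effective integral is indeed the local quadratic one. Both points are standard tail estimates once the radial reduction is in place, but they are the step where care is needed to pin down the matching constants in $\asymp$; I would handle them by splitting $\Theta$ into $B_\delta(\theta_0)$ and its complement for a fixed small $\delta$ and bounding the outer region by its exponentially small maximum contribution.
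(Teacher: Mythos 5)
Your proposal is correct and follows essentially the same route as the paper: both exploit that linearity of $\bar G$ makes the multiplicative update telescope to the Gibbs form $f_k \propto e^{-sk\Phi}$, and then extract $F(f_k)-\inf F \asymp k^{-1}$ from Laplace-type radial integrals near $\theta_0$ (with exponentially small corrections from the rest of $\TT^d$), the paper simply evaluating the normalizer and $\int \Phi\, e^{-sk\Phi}\,\d\tau$ directly as Gamma integrals rather than via the log-partition function. Two small points to fix in your write-up: the identity should read $\int \Phi f_k\,\d\tau = -\frac{d}{d\beta}\log Z(\beta)\big|_{\beta=sk}$ without the spurious $\frac{1}{sk}$ prefactor (your subsequent asymptotic $\asymp d/\beta \asymp k^{-1}$ already uses the correct version), and differentiating the asymptotic $\log Z(\beta) = -c\log\beta + O(1)$ is not automatic in general --- here it is justified by log-convexity of $Z$, or more simply by computing $Z'(\beta) = -\int \Phi\, e^{-\beta\Phi}\,\d\tau \asymp \beta^{-(d+1)}$ with the same radial integral, which is exactly the paper's computation.
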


\begin{proof}
We consider the same setting as in the proof of Proposition~\ref{prop:lower-eta2-PGM}-(I) (and $s=1$ for simplicity). In this case, the update reads $f_{k+1} \propto \exp(f_k-\Phi)$ so by an immediate recursion $f_k\propto \exp(-k\Phi)$. This is essentially a (multi-dimensional) Laplace distribution and when $k$ is large, up to exponentially small terms in $k$, we can compute the integrals over $\RR^d$ instead of $\TT^d$. For the normalizing factor, we have
\begin{align*}
\int_{\RR^d} \exp(-k\Vert x\Vert_2^2)\d x \asymp \int_0^\infty u^{d-1}\exp(-k u)\d u = k^{2-d}\int_0^\infty s^{d-1}e^{-s}\d s = k^{2-d}\Gamma(d)
\end{align*}
where $\Gamma$ is the Gamma function. For the (unnormalized) value of $F(f_k)$, we have
\begin{align*}
\int_{\RR^d} \Vert x\Vert_2 \exp(-k\Vert x\Vert_2^2)\d x \asymp \int_0^\infty u^{d}\exp(-k u)\d u = \alpha^{1-d}\int_0^\infty s^{d}e^{-s}\d s = k^{1-d}\Gamma(d+1).
\end{align*}
By computing the ratio, it follows that $F(f_k)-\inf F \asymp k^{-1}$. 
In Setting (II), we take the function $\tilde \Phi$ as before which is equal to $\dist(\cdot,\theta_0)^2$ near $\theta_0$. Now $f_k\propto \exp(-k\tilde \Phi)$ which is essentially, when $k$ is large, a Gaussian distribution of variance $1/k \asymp F(f_k)- \inf F$.
\end{proof}

\begin{remark} Although the convergence rates obtained with $\eta_{\mathrm{ent}}$ and $\eta_{\mathrm{hyp}}$ are independent of the dimension $d$ (see Table~\ref{table:rates}), this favorable behavior crucially relies on the assumption that $\bar F$ admits a minimizer $\mu^*\in \Mm(\Theta)$. When this is not the case,~\cite{wojtowytsch2020can} show that there is an example where the continuous time dynamics induced by $\eta_{\mathrm{ent}}$ also suffer from the curse of dimensionality (our setting is slightly different but their argument would apply here). In addition, the discrete time dynamics are not stable in this case because the norm of the iterates grows unbounded, see Remark~\ref{rem:bound}.
\end{remark}

\section{Numerical experiments}\label{sec:numerics}
In this section we compare our theoretical rates with the practical behavior of PGM (Algorithms~\ref{alg:PGM}) and APGM (Algorithm~\ref{alg:APGM}) on simple toy problems. The purpose is to show that, although our analysis is asymptotic (in $k$ and in the spatial discretization), it describes well the convergence of those algorithms in certain practical scenarios. The code to reproduce these experiments can be found online\footnote{\url{https://github.com/lchizat/2021-measures-PGM}}.

\begin{figure}
\centering
\includegraphics[scale=0.5]{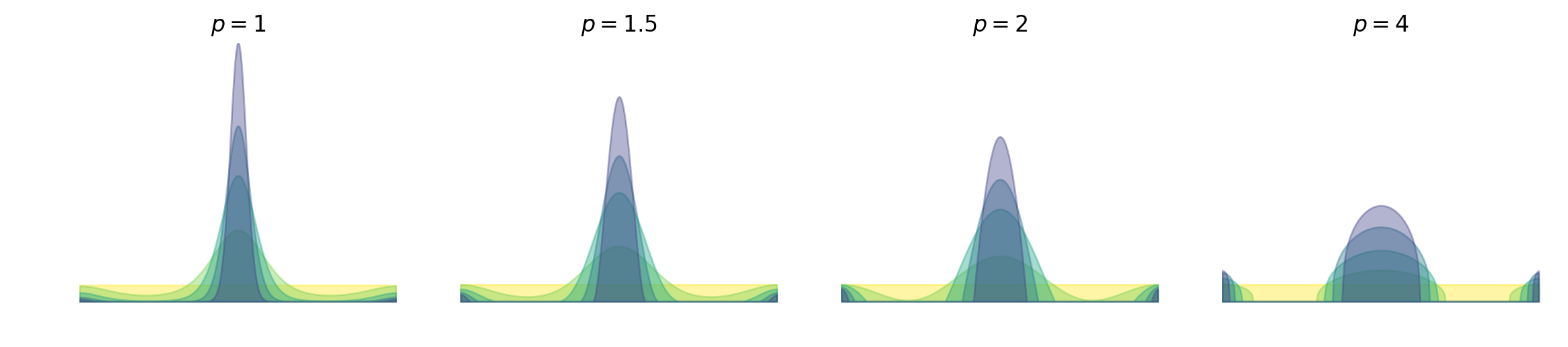}
\caption{Behavior of Algorithm~\ref{alg:PGM} (PGM) for a nonnegative sparse deconvolution problem with solution $\mu^*=\delta_{0}$, with $d=1$ and for various Bregman divergences $\eta_p$ ($p=1$ stands for $\eta_{\mathrm{ent}}$).  We plot the function $f_{k}$ for $k\in \{0,6,6^2,6^3,6^4\}$,  we use the same step-size and the same axes in all cases. The associated convergence plots are in Figure~\ref{fig:1d-pos}.}\label{fig:illustration-cvgce}
\end{figure}

\begin{figure}
\centering
\begin{subfigure}{0.45\linewidth}
\centering
\includegraphics[scale=0.6]{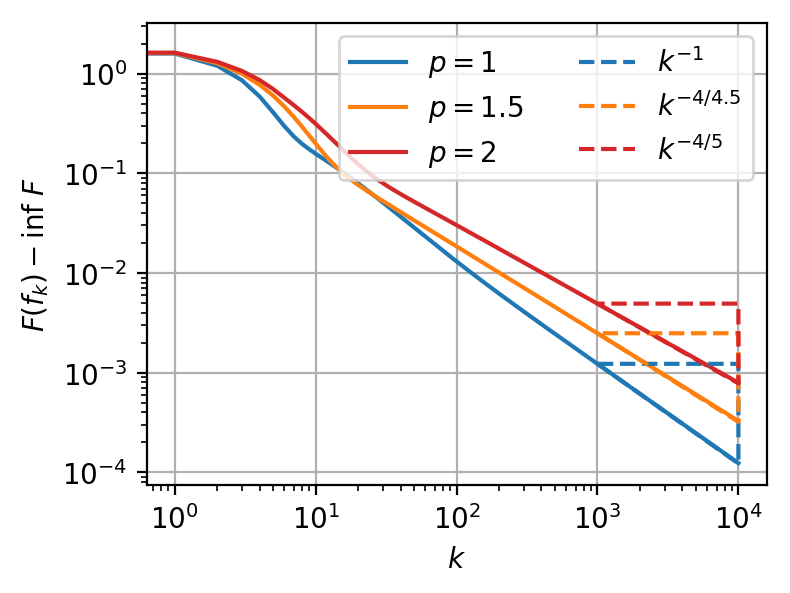}
\caption{PGM ($d=1$, $q=4$)}
\end{subfigure}
\begin{subfigure}{0.45\linewidth}
\centering
\includegraphics[scale=0.6]{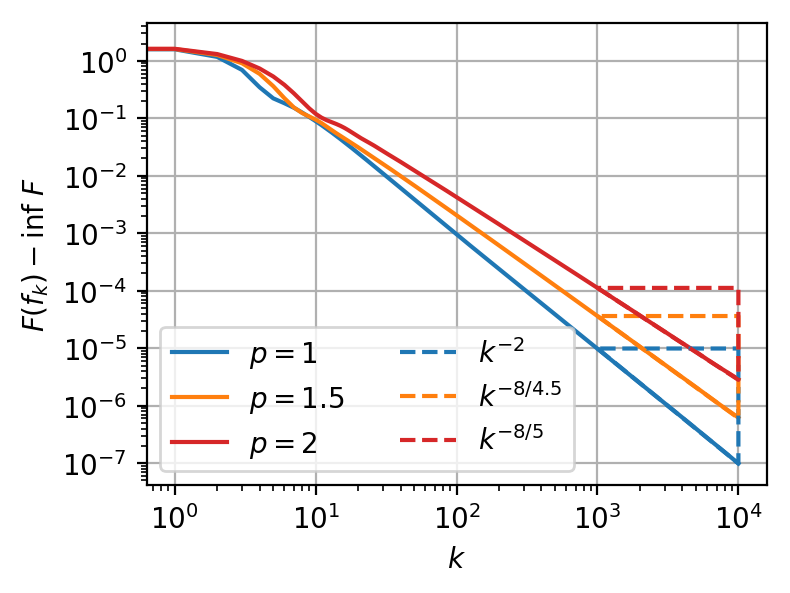}
\caption{APGM ($d=1$, $q=4$)}
\end{subfigure}
\caption{Convergence of PGM and APGM vs.~theoretical rates (up to log factors) in a sparse deconvolution problem with $\bar H=\iota_{\Mm_+(\Theta)}$ and $d =1$. Here $p$ refers to the parameter of $\eta_p$ and $p=1$ refers to $\eta_{\mathrm{ent}}$.  The objective has structure (II*) so $q=4$ in the rates of Table~\ref{table:rates}.}\label{fig:1d-pos}
\end{figure}

\begin{figure}
\centering
\begin{subfigure}{0.45\linewidth}
\centering
\includegraphics[scale=0.6]{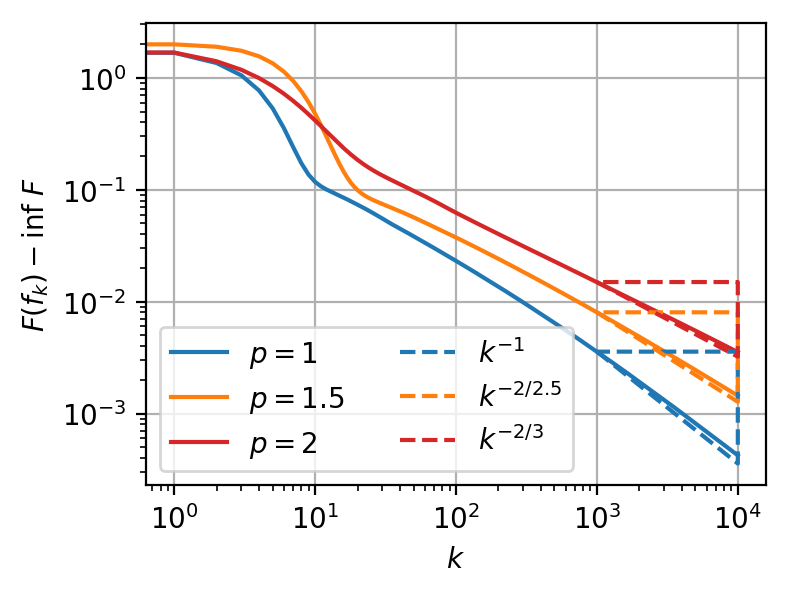}
\caption{PGM ($d=1$, $q=2$)}
\end{subfigure}
\begin{subfigure}{0.45\linewidth}
\centering
\includegraphics[scale=0.6]{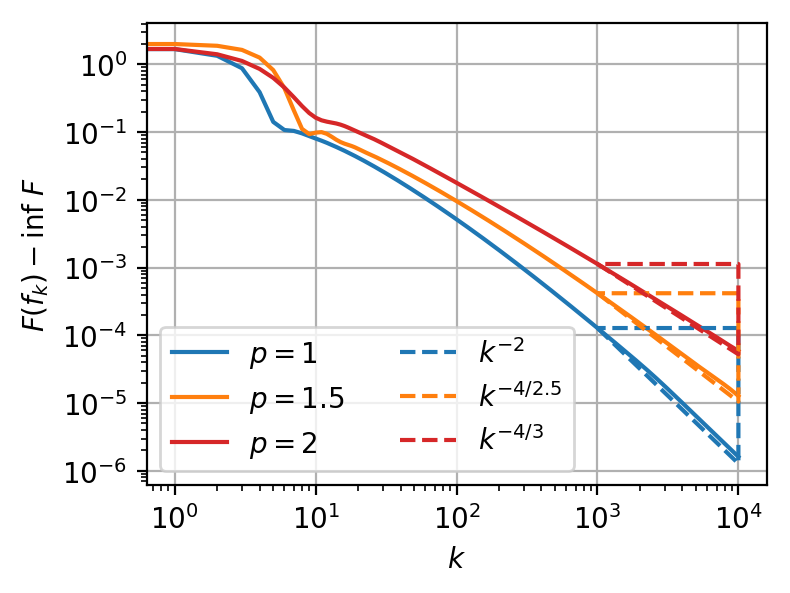}
\caption{APGM ($d=1$, $q=2$)}
\end{subfigure}
\caption{Convergence of PGM and APGM vs.~theoretical rates (up to log factors) in a sparse deconvolution problem with $\bar H=\lambda \Vert \mu\Vert$ and $d =1$. Here $p$ refers to the parameter of $\eta_p$ and $p=1$ refers to $\eta_{\mathrm{hyp}}$.  The objective has structure (II) so $q=2$ in the rates of Table~\ref{table:rates}.}\label{fig:1d-sign}
\end{figure}

\begin{figure}
\centering
\begin{subfigure}{0.45\linewidth}
\centering
\includegraphics[scale=0.6]{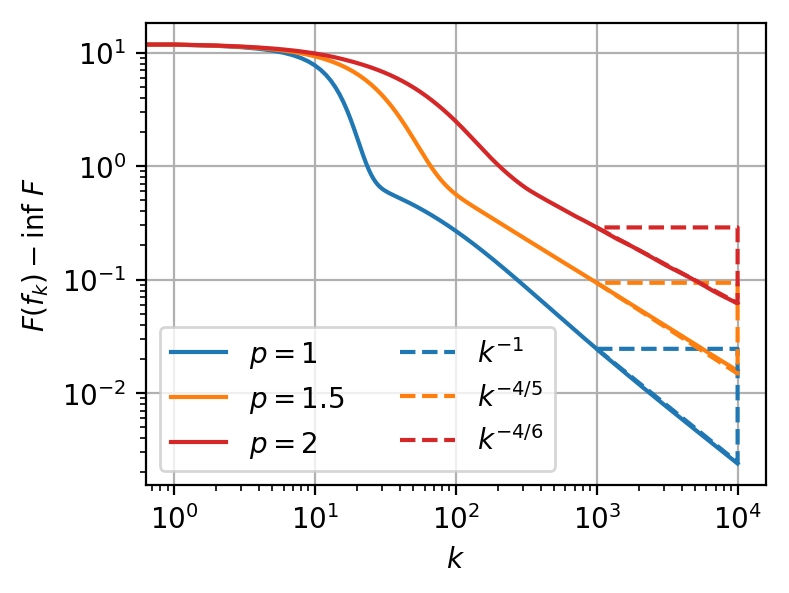}
\caption{PGM ($d=2$, $q=4$)}
\end{subfigure}
\begin{subfigure}{0.45\linewidth}
\centering
\includegraphics[scale=0.6]{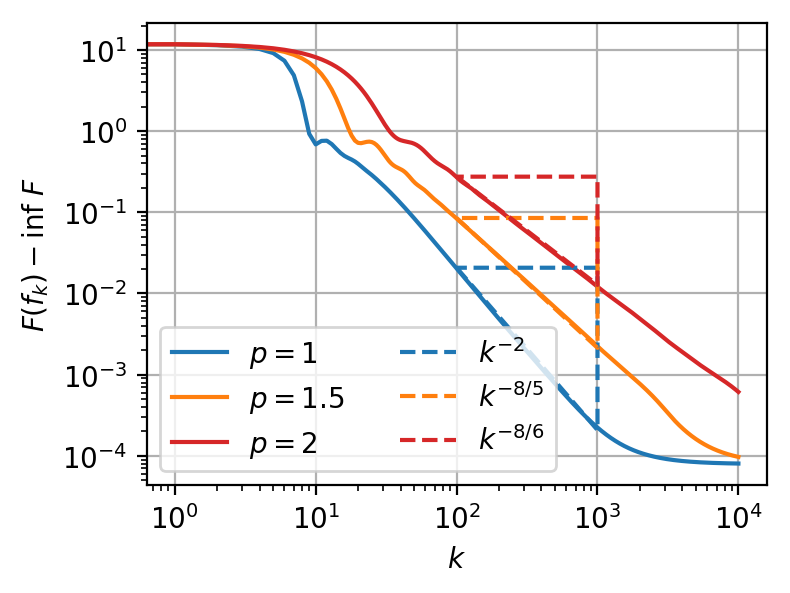}
\caption{APGM ($d=2$, $q=4$)}
\end{subfigure}
\caption{Convergence of PGM and APGM vs.~theoretical rates (up to log factors) in a sparse deconvolution problem with $\bar H=\iota_{\Mm_+(\Theta)}$ and $d =2$. Here $p$ refers to the parameter of $\eta_p$ and $p=1$ refers to $\eta_{\mathrm{ent}}$.  The objective has structure (II*) so $q=4$ in the rates of Table~\ref{table:rates}.}\label{fig:2d-pos}
\end{figure}

\begin{figure}
\centering
\begin{subfigure}{0.45\linewidth}
\centering
\includegraphics[scale=0.6]{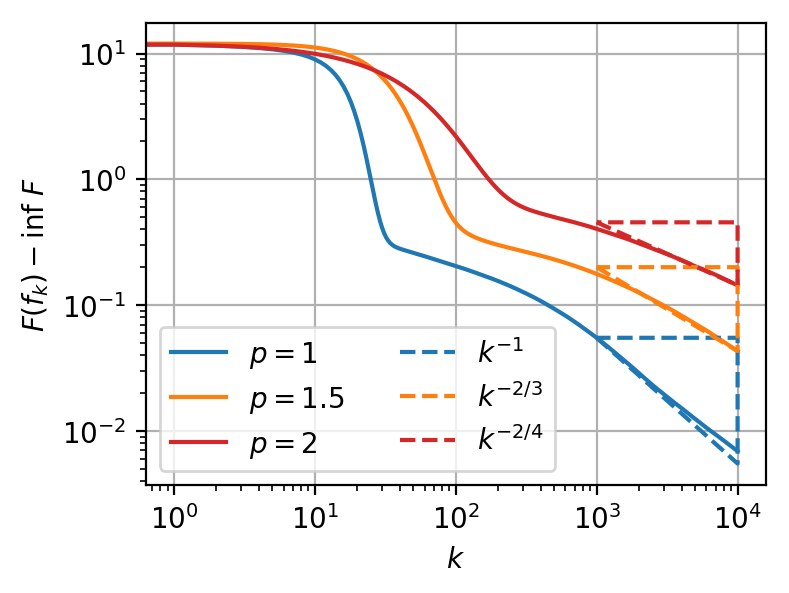}
\caption{PGM ($d=2$, $q=2$)}
\end{subfigure}
\begin{subfigure}{0.45\linewidth}
\centering
\includegraphics[scale=0.6]{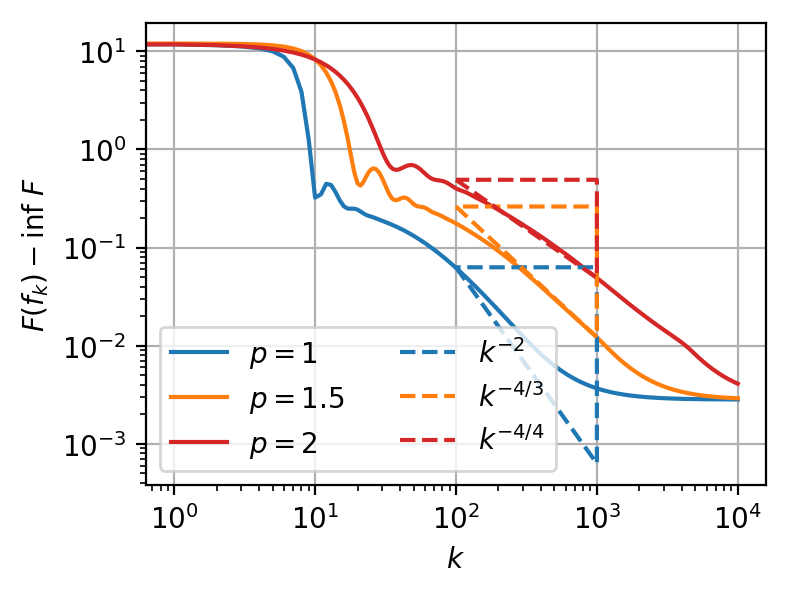}
\caption{APGM ($d=2$, $q=2$)}
\end{subfigure}
\caption{Convergence of PGM and APGM vs.~theoretical rates (up to log factors) in a sparse deconvolution problem with $\bar H=\lambda \Vert \mu\Vert$ and $d =2$. Here $p$ refers to the parameter of $\eta_p$ and $p=1$ refers to $\eta_{\mathrm{hyp}}$.  The objective has structure (II) so $q=2$ in the rates of Table~\ref{table:rates}.}\label{fig:2d-sign}
\end{figure}

\subsection{Sparse deconvolution}
We consider the sparse deconvolution problem introduced in Section~\ref{sec:objective} where $\phi$ is a Dirichlet kernel $\phi(\theta) = \sum_{k \in \{-2,1,0,1,2\}^d} \exp(2\sqrt{-1}\pi k^\top \theta)$ and $y^*(\theta)= \phi(\theta) = \int \phi(\theta-\theta')\d\mu^*(\theta')$ with $\mu^*=\delta_{0}$. The domain $\TT^1$ is discretized into a regular grid of $m=300$ points and $\TT^2$ into a regular grid of $m=60\times 60$ points. Figure~\ref{fig:illustration-cvgce}  illustrates the behavior of the various Bregman divergences for this problem, where it is seen that the iterates $f_k$ (weakly) converge faster to the Dirac solution as $p$ is smaller (in the following discussion, we use $p=1$ to refer to the entropy or hypentropy distance-generating function). 

Figures~\ref{fig:1d-pos}, \ref{fig:1d-sign}, \ref{fig:2d-pos} and \ref{fig:2d-sign} report the convergence rates in a variety of settings, which we compare to our theoretical predictions (without the logarithmic factors, since they do not change the asymptotic slope on a log-log plot). In both cases, $\inf F$ admits a closed form so we can exactly plot $\bar F(f_k\tau_m)-\min \bar F$ and additionally observe the effect of the discretization (here $\tau_m$ is the discretized reference measure). Observe that in the 2D experiments, APGM with $p=1$ quickly reaches the discretization error, and on Figure~\ref{fig:2d-sign}-(b), it does not have enough ``time'' to attain the theoretical asymptotic rate before the effect of the discretization comes in. While our analysis is asymptotic, it thus corresponds in practice to a non-asymptotic and transient behavior.

\subsection{Two-layer neural networks}\label{sec:numerics-nets}
We consider a two-layer ReLU neural network with the objective function introduced in Section~\ref{sec:objective} where we consider $n=10$ input samples $x_i$ on a regular grid on $[-1,1]$ and observed variables  $y_i = \vert x_i\vert -\frac12 + Z_i$ where $Z_i$ are independent and uniform on $[-1,1]$ (see the samples on Figure~\ref{fig:NN-illustration}-(b)). The domain is $\SS^1$ discretized into a regular grid of $m=2000$ points. 
This setting gives an example where $\Phi$ does not have a Lipschitz gradient and is only Lipschitz (observe the irregularity of $\bar G'[\mu^*]$ on Figure~\ref{fig:NN-illustration}-(a)). Since we use the regularization $\bar H=\lambda \Vert \mu\Vert$, we are in the setting (I) from Table~\ref{table:rates}-(b), and the parameter for the rate is $q=1$. 

Figure~\ref{fig:NN-rates} shows the rates of convergence for PGM and APGM. Although the general picture is consistent with the theory, we observe that our guarantees are a bit over-conservative. For PGM, we roughly measure (between iteration $k=10^3$ and $k=10^5$) the rates exponents $(-1.00, -0.72,-0.58)$ for respectively $p=(1,1.5,2)$ which corresponds to a parameter $q\approx 1.5$ rather than $q=1$. For APGM, we roughly measure the rates exponents $(-1.97, -1.71,-1.41)$ instead of the predicted $(-2,-1.33,-1)$. Figure~\ref{fig:NN-illustration}-(a) helps understanding this discrepancy: as can be seen from the proof of Theorem~\ref{th:upper-bounds} what truly determines the asymptotic rate is {how much the objective function increases when $\mu^*$ is mollified, and we quantified this using the regularity and magnitude of $\bar G'[\mu]$ near $\mu^*$. Here it appears that $\bar G'[\mu^*]$ is smooth near 2 out of the 3 points in the support of $\mu^*$, while it is non-smooth at the third point (the one in the middle). The fact that we have a mix of both levels of regularity (i.e.~smooth vs.~merely Lipschitz) may explain why the convergence is a bit faster than with the parameter $q=1$, which corresponds to only taking into account the Lipschitz regularity.}

\begin{figure}
\centering
\begin{subfigure}{0.45\linewidth}
\centering
\includegraphics[scale=0.6]{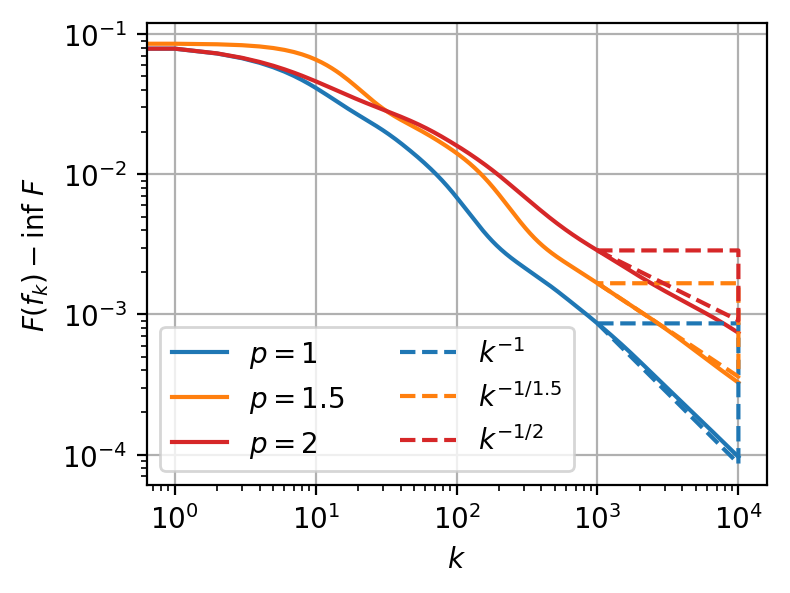}
\caption{PGM ($d=2$, $q=2$)}
\end{subfigure}
\begin{subfigure}{0.45\linewidth}
\centering
\includegraphics[scale=0.6]{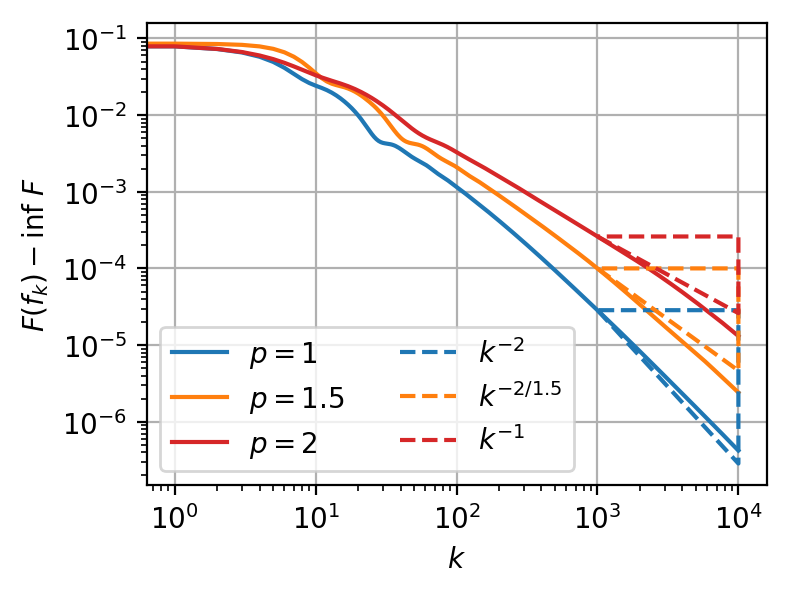}
\caption{APGM ($d=2$, $q=2$)}
\end{subfigure}
\caption{Convergence of PGM and APGM vs.~theoretical rates (up to log factors) for a $2$-layer ReLU neural network with $\bar H=\lambda \Vert \mu\Vert$ and $d =1$. Here $p$ refers to the parameter of $\eta_p$ and $p=1$ refers to $\eta_{\mathrm{hyp}}$.  The objective has structure (I) so $q=1$ in the rates of Table~\ref{table:rates}.}\label{fig:NN-rates}
\end{figure}

\begin{figure}
\centering
\begin{subfigure}{0.64\linewidth}
\centering
\includegraphics[scale=0.46]{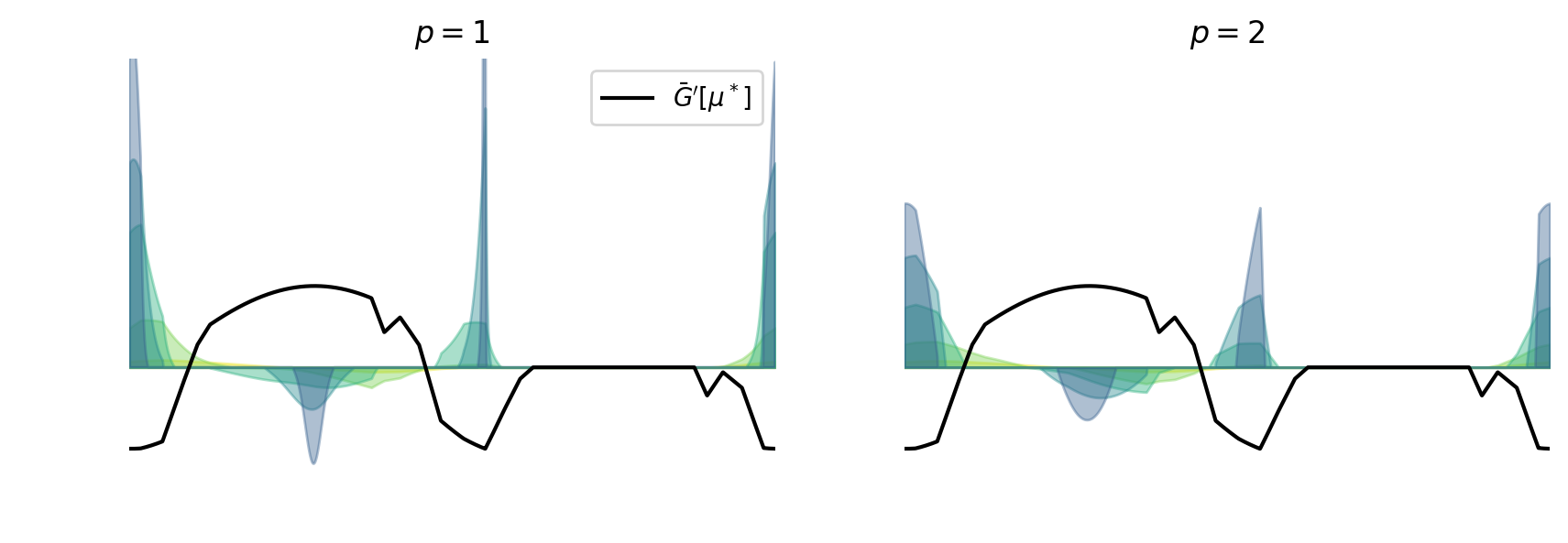}
\caption{Densities $f_k$ for $k\in \{6,6^2,6^3,6^4,6^5\}$.}
\end{subfigure}
\begin{subfigure}{0.34\linewidth}
\centering
\includegraphics[scale=0.46]{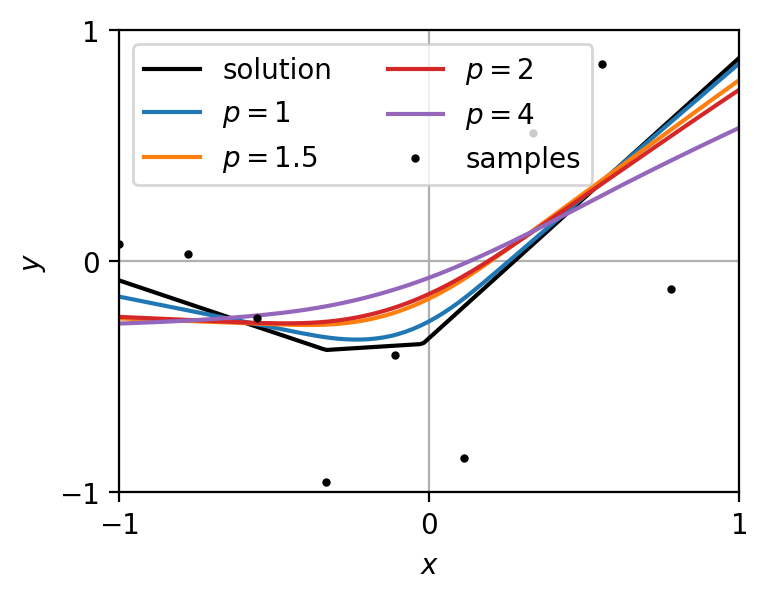}
\caption{Regressor at $k=200$.}
\end{subfigure}
\caption{Dynamics of PGM on a two-layer neural network, for various values of $p$ ($p=1$ corresponds to $\eta_{\mathrm{hyp}}$). Observe in (b) how the dynamics with $\eta_{\mathrm{hyp}}$ fits the kinks of the optimal regressor much faster than with $p>1$. }\label{fig:NN-illustration}
\end{figure}

\section{Conclusion}
We have studied the convergence rates of PGM and APGM for convex optimization in the space of measures. Our analysis exhibits the influence of the regularity of the objective function on the convergence rates. It also confirms that the geometry induced by $\eta_{\mathrm{ent}}$ and $\eta_{\mathrm{hyp}}$ is better suited than the $L^2$ geometry to solve such problems. An important question for future research is to better understand the unregularized case, where the phenomenon of algorithmic regularization is at play.

\paragraph{Acknowledgments.} I am thankful to Adrien Taylor for fruitful discussions during the preparation of this paper. In particular, I learnt about Algorithm~\ref{alg:APGM} (APGM) from him.
\bibliography{LC.bib}


\end{document}